\theoremstyle{plain}
\newtheorem{theorem}{Theorem}
\newtheorem{proposition}{Proposition}
\newtheorem{lemma}{Lemma}
\theoremstyle{definition}
\newtheorem{definition}{Definition}
\newtheorem{remark}{Remark}
\renewcommand{\div}{\mathrm{div}} 
\newcommand{\supp}{\mathrm{supp\,}} 
\newcommand{\Span}{\mathrm{span\,}} 
\title{Residual based Error Estimate and Quasi-Interpolation on Polygonal Meshes for High Order BEM-based FEM\footnotetext{Submitted to journal: 03.11.2015}}
\author{Steffen Wei\ss er\footnote{Saarland University, Department of Mathematics, 66041 Saarbr\"ucken, Germany}}
\begin{document}

\maketitle

\begin{abstract}
Only a few numerical methods can treat boundary value problems on polygonal and polyhedral meshes. The BEM-based Finite Element Method is one of the new discretization strategies, which make use of and benefits from the flexibility of these general meshes that incorporate hanging nodes naturally. The article in hand addresses quasi-interpolation operators for the approximation space over polygonal meshes. To prove interpolation estimates the Poincar\'e constant is bounded uniformly for patches of star-shaped elements. These results give rise to the residual based error estimate for high order BEM-based FEM and its reliability as well as its efficiency are proven. Such a posteriori error estimates can be used to gauge the approximation quality and to implement adaptive FEM strategies. Numerical experiments show optimal rates of convergence for meshes with non-convex elements on uniformly as well as on adaptively refined meshes.

{\footnotesize
\noindent\textbf{Keywords} BEM-based FEM $\cdot$ polygonal finite elements $\cdot$ quasi-interpolation $\cdot$ Poincar\'e constant $\cdot$ residual based error estimate\\[0.5ex]
\noindent\textbf{Mathematics Subject Classification (2000)} 65N15 $\cdot$ 65N30 $\cdot$ 65N38 $\cdot$ 65N50
}
\end{abstract}

\section{Introduction}
\label{sec:Introduction}
Adaptive finite element strategies play a crucial role in nowadays efficient implementations for the numerical solution of boundary value problems. Due to a posteriori error control, the meshes are only refined locally to reduce the computational cost and to increase the accuracy. For classical discretization techniques, as the Finite Element Method (FEM), triangular or quadrilateral (2D) and tetrahedral or hexahedral (3D) elements are applied and one has to take care to preserve the admissibility of the mesh after local refinements~\cite{Verfuerth2013}. Polygonal (2D) and polyhedral (3D) meshes are, therefore, very attractive for such local refinements. They allow a variety of element shapes and they naturally treat hanging nodes in the discretization, which appear as common nodes on a more general element. But only a few methods can handle such meshes which also provide a more flexible and direct way to meshing for complex geometries and interfaces. 

The BEM-based Finite Element Method is one of the new promising approaches applicable on general polygonal and polyhedral meshes. It was first introduced in~\cite{CopelandLangerPusch2009} and analysed in~\cite{HofreitherLangerPechstein2010,Hofreither2011}. The method makes use of local Trefftz-like trial functions, which are defined implicitly as local solutions of boundary value problems. These problems are treated by means of Boundary Element Methods~(BEM) that gave the name. The BEM-based FEM has been generalized to high order approximations~\cite{RjasanowWeisser2012,Weisser2014,Weisser2014PAMM}, mixed formulations with $H(\mathrm{div})$ conforming approximations~\cite{EfendievGalvisLazarovWeisser2014} as well as to convection-adapted trial functions~\cite{HofreitherLangerWeisser2015}. Furthermore, the strategy has been applied to general polyhedral meshes~\cite{RjasanowWeisser2014} and time dependent problems~\cite{Weisser2014WCCM}. For efficient computations fast FETI-type solvers have been developed for solving the resulting large linear systems of equations~\cite{HofreitherLangerPechstein2014}. Additionally, the BEM-based FEM has shown its flexibility and applicability on adaptively refined polygonal meshes~\cite{Weisser2011,Weisser2015Enumath}. 

In the last few years, polygonal and polyhedral meshes attracted a lot of interest. New methods have been developed and conventional approaches were mathematically revised to handle them. The most prominent representatives for the new approaches beside of the BEM-based FEM are the Virtual Element Method~\cite{BeiraoDaVeigaBrezziCangianiManziniMariniRusso2012} and the Weak Galerkin Method~\cite{WangYe2014}. Strategies like discontinuous Galerkin~\cite{DolejsiFeistauerSobotikova2005} and the mimetic discretization techniques~\cite{BeiraoDaVeigaLipnikovManzini2011} are also considered on polygonal and polyhedral meshes. But there are only a few references to adaptive strategies and a posteriori error control. A posteriori error estimates for the discontinuous Galerkin method are given in~\cite{KarakashianPascal2003}. To the best of our knowledge there is only one publication for the Virtual Element Method~\cite{BeiraoDaVeigaManzini2015} and one for the Weak Galerkin Method~\cite{ChenWangYe2014}. The first mentioned publication deals with a residual a posteriori error estimate for a $C^1$-conforming approximation space, and the second one is limited to simplicial meshes. For the mimetic discretization technique there are also only few references which are limited to low order methods, see the recent work~\cite{AntoniettiBeiraoDaVeigaLovadinaVerani2013}. 

The article in hand contributes to this short list of literature. In Section~\ref{sec:ModelProblemDiscretization}, some notation and the model problem are given. The regularity of polygonal meshes is discussed and useful properties are proven. Additionally, the BEM-based FEM with high order approximation spaces is reviewed. Section~\ref{sec:QuasiInterpolation} deals with quasi-interpolation operators of Cl\'ement type for which interpolation estimates are shown on regular polygonal meshes with non-convex elements. These operators are used in Section~\ref{sec:ResidualErrorEstimate} to derive a residual based error estimate for the BEM-based FEM on general meshes which is reliable and efficient. The a posteriori error estimate can be applied to adaptive mesh refinement that yields optimal orders of convergence in the numerical examples in Section~\ref{sec:NumericalExperiments}.

\section{Model problem and discretization}
\label{sec:ModelProblemDiscretization}
Let $\Omega\subset\mathbb R^2$ be a connected, bounded, polygonal domain with boundary $\Gamma=\Gamma_D\cup\Gamma_N$, where $\Gamma_D\cap\Gamma_N = \emptyset$ and $|\Gamma_D|>0$. We consider the diffusion equation with mixed Dirichlet--Neumann boundary conditions
\begin{equation}\label{eq:BVP}
   -\mathrm{div}(a\nabla u) = f\quad\mbox{in }\Omega, \qquad 
   u=g_D\quad\mbox{on }\Gamma_D, \qquad 
   a\frac{\partial u}{\partial n_\Omega}=g_N\quad\mbox{on }\Gamma_N,
\end{equation}
where $n_\Omega$ denotes the outer unite normal vector to the boundary of~$\Omega$. For simplicity, we restrict ourselves to piecewise constant and scalar valued diffusion $a\in L_\infty(\Omega)$ with $0<a_\mathrm{min}\leq a\leq a_\mathrm{max}$ almost everywhere in $\Omega$. Furthermore, we assume that the discontinuities of~$a$ are aligned with the initial mesh in the discretization later on. The treatment of continuously varying coefficients is discussed in~\cite{RjasanowWeisser2012}, where the coefficient is approximated by a piecewise constant function or by a proper interpolation in a BEM-based approximation space of lower order.

The usual notation for Sobolev spaces is utilized, see~\cite{Adams1975,McLean2000}. Thus, for an open subset $\omega\subset\Omega$, which is either a two dimensional domain or a one dimensional Lipschitz manifold, the Sobolev space $H^s(\omega)$, $s\in\mathbb R$ is equipped with the norm $\|\cdot\|_{s,\omega}=\|\cdot\|_{H^s(\omega)}$ and semi-norm $|\cdot|_{s,\omega}=|\cdot|_{H^s(\omega)}$. This notation includes the Lebesgue space of square integrable functions for $s=0$, since $H^0(\omega)=L_2(\omega)$. The $L_2(\omega)$-inner product is abbreviated to $(\cdot,\cdot)_\omega$.

For $g_D\in H^{1/2}(\Gamma_D)$, $g_N\in L_2(\Gamma_N)$ and $f\in L_2(\Omega)$, the well known weak formulation of problem~\eqref{eq:BVP} reads
\begin{equation}\label{eq:VF}
 \mbox{Find } u\in g_D+V:\quad b(u,v) = (f,v)_\Omega+(g_N,v)_{\Gamma_N}\quad \forall v\in V,
\end{equation}
and admits a unique solution, since the bilinear form
\[b(u,v) = (a\nabla u,\nabla v)_\Omega\]
is bounded and coercive on
\[V=\{v\in H^1(\Omega):v=0\mbox{ on } \Gamma_D\}.\]
Here, $g_D+V\subset H^1(\Omega)$ denotes the affine space which incorporates the Dirichlet datum. Note that the same symbol $g_D$ is used for the Dirichlet datum and its extension into $H^1(\Omega)$. 

\begin{figure}[tbp]
 \centering
 \scalebox{0.8}{\input{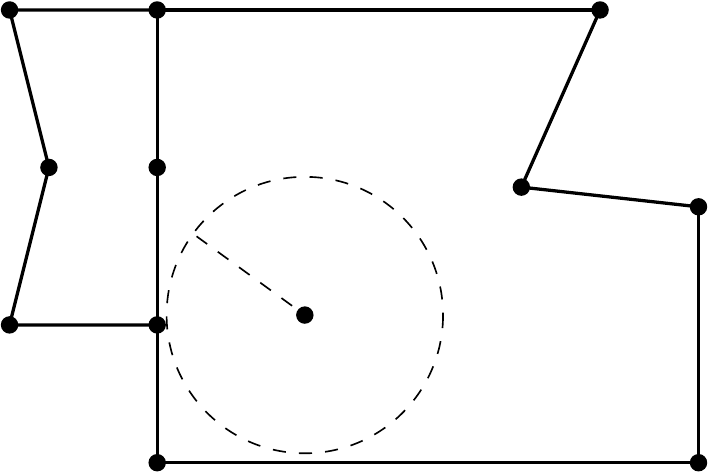_t}}
 \vspace*{-1ex}
 \caption{Two neighbouring elements in a polygonal mesh, nodes are marked with dots}
 \label{fig:regMesh}
\end{figure}
The domain~$\Omega$ is decomposed into a family of polygonal discretizations $\mathcal K_h$ consisting of non-overlapping, star-shaped elements $K\in\mathcal K_h$ such that
\[\overline\Omega = \bigcup_{K\in\mathcal K_h}\overline K.\]
Their boundaries~$\partial K$ consist of nodes and edges, cf.\ Figure~\ref{fig:regMesh}. 
An edge $E=\overline{z_bz_e}$ is always located between two nodes, the one at the beginning~$z_b$ and the one at the end~$z_e$. We set $\mathcal N(E)=\{z_b,z_e\}$, these points are fixed once per edge and they are the only nodes on $E$. 
The sets of all nodes and edges in the mesh are denoted by $\mathcal N_h$ and $\mathcal E_h$, respectively. The sets of nodes and edges corresponding to an element $K\in\mathcal K_h$ are abbreviated to $\mathcal N(K)$ and $\mathcal E(K)$. Later on, the sets of nodes and edges which are in the interior of $\Omega$, on the Dirichlet boundary~$\Gamma_D$ and on the Neumann boundary~$\Gamma_N$ are needed. Therefore, we decompose $\mathcal N_h$ and $\mathcal E_h$ into $\mathcal N_h = \mathcal N_{h,\Omega} \cup \mathcal N_{h,D} \cup \mathcal N_{h,N}$ and $\mathcal E_h = \mathcal E_{h,\Omega} \cup \mathcal E_{h,D} \cup \mathcal E_{h,N}$.
The length of an edge~$E$ and the diameter of an element~$K$ are denoted by $h_E$ and $h_K=\sup\{\vert x-y\vert:x,y\in\partial K\}$, respectively. 

\subsection{Polygonal mesh}
In order to proof convergence and approximation estimates the meshes have to satisfy some regularity assumptions. We recall the definition gathered from~\cite{Weisser2014} which is needed in the remainder of the presentation.
\begin{definition}\label{def:regularMesh}
 The family of meshes $\mathcal K_h$ is called regular if it fulfills:
 \begin{enumerate}
  \item Each element $K\in\mathcal K_h$ is a star-shaped polygon with respect to a circle of radius~$\rho_K$ and midpoint~$z_K$.
  \item The aspect ratio is uniformly bounded from above by $\sigma_\mathcal{K}$, i.e. \\$h_K/\rho_K<\sigma_\mathcal{K}$ for all $K\in\mathcal K_h$.
  \item There is a constant $c_\mathcal{K}>0$ such that for all elements $K\in\mathcal K_h$ and all its edges $E\subset\partial K$ it holds $h_K\leq c_\mathcal{K}h_E$.
 \end{enumerate}
\end{definition}
The circle in the definition is chosen in such a way that its radius is maximal, cf.\ Figure~\ref{fig:regMesh}. If the position of the circle is not unique, its midpoint~$z_K$ is fixed once per element. Without loss of generality, we assume $h_K<1$, $K\in\mathcal K_h$ that can always be satisfied by scaling of the domain. 

\begin{figure}[tbp]
 \centering
 \scalebox{0.8}{\input{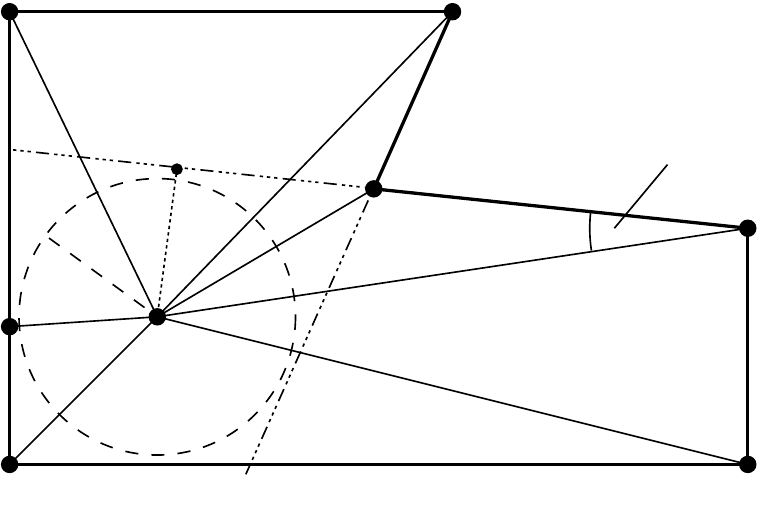_t}}
 \vspace*{-1ex}
 \caption{Auxiliary triangulation $\mathcal T_h(K)$ of star-shaped element $K$, altitude $h_a$ of triangle $T_E\in\mathcal T_h(K)$ perpendicular to $E$ and angle $\beta$}
 \label{fig:regMesh_auxTria}
\end{figure}
In the following, we give some useful properties of regular meshes. An important analytical tool is an auxiliary triangulation $\mathcal T_h(K)$ of the elements $K\in\mathcal K_h$. This triangulation is constructed by connecting the nodes on the boundary of $K$ with the point $z_K$ of Definition~\ref{def:regularMesh}, see Figure~\ref{fig:regMesh_auxTria}. Consequently, $\mathcal T_h(K)$ consists of the triangles $T_E$ for $E=\overline{z_bz_e}\in\mathcal E(K)$, which are defined by the points $z_b$, $z_e$ and $z_K$.

\begin{lemma}\label{lem:regAuxTria}
Let $K$ be a polygonal element of a regular mesh $\mathcal K_h$. The auxiliary triangulation $\mathcal T_h(K)$ is shape-regular in the sense of Ciarlet~\cite{Ciarlet1978}, i.e. neighbouring triangles share either a common node or edge and the aspect ratio of each triangle is uniformly bounded by some constant~$\sigma_{\mathcal T}$, which only depends on $\sigma_\mathcal{K}$ and $c_\mathcal{K}$.
\end{lemma}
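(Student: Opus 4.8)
The plan is to treat the two assertions separately: first the conformity (that neighbouring triangles meet in a common node or a common edge), which is essentially a consequence of the construction, and then the uniform bound on the aspect ratio, which is where the regularity assumptions of Definition~\ref{def:regularMesh} enter.

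For the conformity I would argue as follows. By assumption~1, $K$ is star-shaped with respect to the disk $B(z_K,\rho_K)$, in particular with respect to its centre $z_K$. Hence every ray emanating from $z_K$ meets $\partial K$ exactly once, so the boundary is a radial graph over the angle seen from $z_K$ and the nodes of $K$ appear in the same cyclic order angularly as they do along $\partial K$. Consequently the segments $\overline{z_i z_K}$ joining the nodes $z_i$ to $z_K$ do not cross, consecutive ones bound exactly the triangle $T_E$ associated with the edge $E$ between the two corresponding nodes, and the triangles $T_E$, $E\in\mathcal E(K)$, tile $K$. Two triangles belonging to edges that share a node $z_i$ then meet precisely in the full segment $\overline{z_i z_K}$, while any other pair meets only in the common apex $z_K$; this is exactly Ciarlet conformity.

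For the aspect ratio I would first collect elementary bounds on the three side lengths of $T_E=\mathrm{conv}\{z_b,z_e,z_K\}$. Since $z_b,z_e,z_K\in\overline K$, each side is at most the diameter, so $h_{T_E}\le h_K$. The base satisfies $h_E\ge h_K/c_\mathcal{K}$ by assumption~3, and the two remaining sides obey $|z_K-z_b|,|z_K-z_e|\ge\rho_K$ because $z_b,z_e\in\partial K$ while the open disk $B(z_K,\rho_K)$ lies in the interior of $K$. The decisive geometric fact, illustrated in Figure~\ref{fig:regMesh_auxTria}, is the lower bound $h_a\ge\rho_K$ for the altitude $h_a$ of $T_E$ dropped from $z_K$ onto the line through $E$. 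To see this, let $n$ be the outer unit normal of $E$ and let $d(\cdot)$ denote signed distance to that line, so that $d$ vanishes on $E$, is negative on the interior side, and $d(z_K)=-h_a$. If some $y\in\overline{B(z_K,\rho_K)}$ had $d(y)>0$, then for $x$ in the relative interior of $E$ the segment $[x,y]$ would leave $K$ immediately, since near such $x$ the set $K$ locally coincides with the half-plane $\{d\le 0\}$; this contradicts star-shapedness with respect to the disk. Hence $\max_{\overline B}d=-h_a+\rho_K\le 0$, i.e.\ $h_a\ge\rho_K$.

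With these ingredients the conclusion is routine. The area obeys
\[
 |T_E|=\tfrac12\,h_E\,h_a\ \ge\ \tfrac12\,\frac{h_K}{c_\mathcal{K}}\,\rho_K\ \ge\ \frac{h_K^2}{2\,c_\mathcal{K}\sigma_\mathcal{K}},
\]
using assumption~2 in the form $\rho_K>h_K/\sigma_\mathcal{K}$, while the perimeter is bounded by $3h_K$. Writing the inradius as $\rho_{T_E}=|T_E|/s$ with semi-perimeter $s\le\tfrac32 h_K$ gives $\rho_{T_E}\ge h_K/(3c_\mathcal{K}\sigma_\mathcal{K})$, hence $h_{T_E}/\rho_{T_E}\le 3c_\mathcal{K}\sigma_\mathcal{K}=:\sigma_{\mathcal T}$, a constant depending only on $c_\mathcal{K}$ and $\sigma_\mathcal{K}$ as claimed. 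I expect the main obstacle to be precisely the altitude estimate $h_a\ge\rho_K$: it is the only step that genuinely uses star-shapedness with respect to the whole disk (rather than merely its centre) together with the local flatness of $\partial K$ along an edge, and it must be argued carefully because the foot of the altitude need not lie on the segment $E$ itself.
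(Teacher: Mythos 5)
Your proof is correct and follows essentially the same route as the paper: the inradius is expressed through area and perimeter, the area is bounded below via $|T_E|=\tfrac12 h_E h_a$ with the key altitude estimate $h_a\ge\rho_K$ from star-shapedness, and assumptions~2 and~3 of Definition~\ref{def:regularMesh} yield the same constant $\sigma_{\mathcal T}=3c_\mathcal{K}\sigma_\mathcal{K}$. The only difference is that you spell out two points the paper leaves implicit — the Ciarlet conformity of the triangulation and the justification that the line through $E$ cannot cut the disk $B(z_K,\rho_K)$ — which is a welcome addition but not a different method.
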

\begin{proof}
Let $T_E\in\mathcal T_h(K)$ be a triangle with diameter~$h_{T_E}$ and let $\rho_{T_E}$ be the radius of the incircle. It is known that the area of~$T_E$ is given by $|T_E|=\frac{1}{2}|\partial T_E|\rho_{T_E}$, where $|\partial T_E|$ is the perimeter of~$T_E$. Obviously, it is $|\partial T_E|\leq3h_{T_E}$. On the other hand, we have the formula $|T_E|=\frac{1}{2}h_Eh_a$, where $h_a$ is the altitude of the triangle perpendicular to~$E$, see Figure~\ref{fig:regMesh_auxTria}. Since the element~$K$ is star-shaped with respect to a circle of radius~$\rho_K$, the line through the side $E\in\mathcal E_h$ of the triangle does not intersect this circle. Thus, $h_a\geq\rho_K$ and we have the estimate $|T_E|\geq \frac{1}{2}h_E\rho_K$. Together with Definitions~\ref{def:regularMesh}, we obtain
\[
 \frac{h_{T_E}}{\rho_{T_E}} 
 = \frac{|\partial T_E|h_{T_E}}{2|T_E|} 
 \leq \frac{3h_{T_E}^2}{h_E\rho_K} 
 \leq 3c_{\mathcal K}\sigma_{\mathcal K}\frac{h_{T_E}^2}{h_K^2} 
 \leq 3c_{\mathcal K}\sigma_{\mathcal K} 
 = \sigma_{\mathcal T}.
\]
\end{proof}
In the previous proof, we discovered and applied the estimate
\begin{equation}\label{eq:boundAreaOfTE}
 |T_E|\geq \tfrac{1}{2}h_E\rho_K
\end{equation}
for the area of the auxiliary triangle. This inequality will be of importance once more. 
We may also consider the auxiliary triangulation~$\mathcal T_h(\Omega)$ of the whole domain~$\Omega$ which is constructed by gluing the local triangulations~$\mathcal T_h(K)$. Obviously, $\mathcal T_h(\Omega)$ is also shape-regular in the sense of Ciarlet.
Furthermore, the angles in the auxiliary triangulation $\mathcal T_h(K)$ next to~$\partial K$ can be bounded from below. This gives rise to the following result.
\begin{lemma}\label{lem:IsoscelesTriangles}
 Let $\mathcal K_h$ be a regular mesh. Then, there is an angle $\alpha_\mathcal{K}$ with $0<\alpha_\mathcal{K}\leq\pi/3$ such that for all elements $K\in\mathcal K_h$ and all its edges $E\in\mathcal E(K)$ the isosceles triangle~$T_E^\mathrm{iso}$ with longest side $E$ and two interior angles $\alpha_\mathcal{K}$ lies inside $T_E\in\mathcal T_h(K)$ and thus inside the element $K$, see Figure~\ref{fig:regMesh_auxTria}. The angle~$\alpha_\mathcal{K}$ only depends on~$\sigma_\mathcal{K}$ and $c_\mathcal{K}$.
\end{lemma}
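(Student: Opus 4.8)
The plan is to turn the geometric containment into a lower bound on the two interior angles of the auxiliary triangle $T_E$ at the endpoints $z_b$ and $z_e$ of $E$, which I denote $\beta_b$ and $\beta_e$. The isosceles triangle $T_E^\mathrm{iso}$ and the triangle $T_E$ share the base $E$ and lie on the same side of the line through $E$ (the $z_K$-side). Since $T_E$ is convex and already contains $z_b$ and $z_e$, the inclusion $T_E^\mathrm{iso}\subset\overline{T_E}$ follows as soon as the apex of $T_E^\mathrm{iso}$ lies in $\overline{T_E}$. The two equal sides of $T_E^\mathrm{iso}$ leave $z_b$ and $z_e$ under the angle $\alpha_\mathcal{K}$, whereas the sides $\overline{z_bz_K}$ and $\overline{z_ez_K}$ of $T_E$ leave them under the angles $\beta_b$ and $\beta_e$. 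Hence, if $\alpha_\mathcal{K}\le\min\{\beta_b,\beta_e\}$, each equal side of $T_E^\mathrm{iso}$ stays inside the angular wedge spanned by $E$ and the corresponding side of $T_E$, so its apex lies on the interior side of both lines through $\overline{z_bz_K}$ and $\overline{z_ez_K}$ and above $E$, i.e.\ inside $T_E\subset K$.

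The heart of the matter is therefore a uniform lower bound for $\beta_b$ and $\beta_e$, and this is delivered by Lemma~\ref{lem:regAuxTria}: the auxiliary triangulation is shape-regular with aspect ratio bounded by $\sigma_\mathcal{T}$. I would invoke the elementary fact that a bounded aspect ratio enforces a minimal-angle condition. Concretely, for a triangle with smallest angle $\theta$, the two sides enclosing $\theta$ are bounded by $h_{T_E}$ and the semiperimeter satisfies $s\ge h_{T_E}/2$; combining the area identity $|T_E|=\rho_{T_E}\,s$ with $\rho_{T_E}\ge h_{T_E}/\sigma_\mathcal{T}$ and $|T_E|\le\tfrac12 h_{T_E}^2\sin\theta$ yields $\sin\theta\ge 1/\sigma_\mathcal{T}$. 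Consequently every interior angle of every $T_E$ is bounded below by $\theta_{\min}:=\arcsin(1/\sigma_\mathcal{T})>0$, a quantity that depends only on $\sigma_\mathcal{T}$ and hence, by Lemma~\ref{lem:regAuxTria}, only on $\sigma_\mathcal{K}$ and $c_\mathcal{K}$.

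It remains to fix the angle. I would set $\alpha_\mathcal{K}:=\min\{\pi/3,\theta_{\min}\}$. The cap at $\pi/3$ is exactly what makes $E$ the longest side of $T_E^\mathrm{iso}$ (base angles $\le\pi/3$ force the base to be at least as long as the legs), while the bound $\alpha_\mathcal{K}\le\theta_{\min}\le\min\{\beta_b,\beta_e\}$ supplies the angle condition required in the first step. Both inequalities hold for every element $K$ and every edge $E$ simultaneously, and $\alpha_\mathcal{K}$ inherits its dependence solely on $\sigma_\mathcal{K}$ and $c_\mathcal{K}$, as claimed.

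The step I expect to need the most care is the containment argument itself, in particular the case where a base angle $\beta_b$ or $\beta_e$ is obtuse: there a naive comparison of heights via $\tan$ breaks down, but an obtuse base angle only pushes the corresponding side of $T_E$ further out, so the wedge argument (phrased through half-planes rather than through $\tan$) still places the apex inside $T_E$. Making this robust to the position of the foot of the altitude from $z_K$ relative to the midpoint of $E$ is the one place where a coordinate-free, half-plane formulation is preferable to a direct height comparison.
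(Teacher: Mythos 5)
Your proof is correct, and it reaches the same conclusion with the same final choice of angle (capped at $\pi/3$ so that $E$ is the longest side), but the key quantitative step is obtained by a different route. The paper bounds the base angle $\beta$ of $T_E$ directly from star-shapedness: with $y$ the projection of $z_K$ onto the line through $E$, it uses $\sin\beta=\vert y-z_K\vert/\vert z_b-z_K\vert\geq\rho_K/h_K\geq 1/\sigma_\mathcal{K}$, since the line through $E$ cannot cut the inscribed circle; this yields $\alpha_\mathcal{K}=\min\{\pi/3,\arcsin\sigma_\mathcal{K}^{-1}\}$ in one line. You instead route the angle bound through Lemma~\ref{lem:regAuxTria}: shape regularity of $\mathcal T_h(K)$ gives $h_{T_E}/\rho_{T_E}\leq\sigma_\mathcal{T}=3c_\mathcal{K}\sigma_\mathcal{K}$, and your inradius--area computation ($|T_E|=\rho_{T_E}s$ together with $|T_E|\leq\tfrac12 h_{T_E}^2\sin\theta$) correctly converts this into the minimum-angle bound $\sin\theta\geq 1/\sigma_\mathcal{T}$, so your constant is $\arcsin\bigl(1/(3c_\mathcal{K}\sigma_\mathcal{K})\bigr)$ rather than $\arcsin(1/\sigma_\mathcal{K})$ --- weaker, but equally uniform and with the admissible dependence on the regularity parameters. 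What your version buys is modularity (the argument works for any triangulation with bounded aspect ratio, independent of how $T_E$ arises) and an explicit, robust containment step: the paper asserts the inclusion of $T_E^\mathrm{iso}$ once the base angles are bounded below, whereas you spell out the wedge/half-plane argument, including the obtuse-base-angle case where a naive height comparison would fail. What the paper's version buys is brevity and a sharper constant depending only on $\sigma_\mathcal{K}$.
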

\begin{proof}
 Let $T_E\in\mathcal T_h(K)$. We bound the angle~$\beta$ in $T_E$ next to $E=\overline{z_bz_e}$ from below, see Figure~\ref{fig:regMesh_auxTria}. Without loss of generality, we assume that~$\beta<\pi/2$. Using the projection~$y$ of~$z_K$ onto the straight line through the edge~$E$, we recognize 
 \[\sin\beta=\frac{\vert y-z_K\vert}{\vert z_b-z_K\vert}\geq \frac{\rho_K}{h_K} \geq \frac{1}{\sigma_\mathcal{K}}\in(0,1).\]
 Consequently, it is $\beta\geq\arcsin{\sigma_\mathcal{K}^{-1}}$. Since this estimate is valid for all angles next to~$\partial K$ of the auxiliary triangulation, the isosceles triangles~$T_E^\mathrm{iso}$, $E\in\mathcal E(K)$ with common angle $\alpha_\mathcal{K}=\min\{\pi/3, \arcsin\sigma_\mathcal{K}^{-1}\}$ lie inside the auxiliary triangles~$T_E$ and therefore inside~$K$.
\end{proof}

\begin{figure}[htbp]
 \centering
 \begin{tabular}{l@{\qquad}l@{\qquad}l}
 \scalebox{0.8}{\input{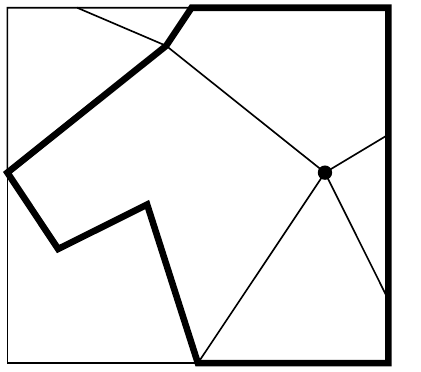_t}} &
 \scalebox{0.8}{\input{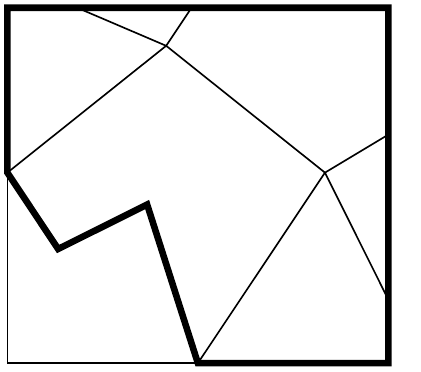_t}} &
 \scalebox{0.8}{\input{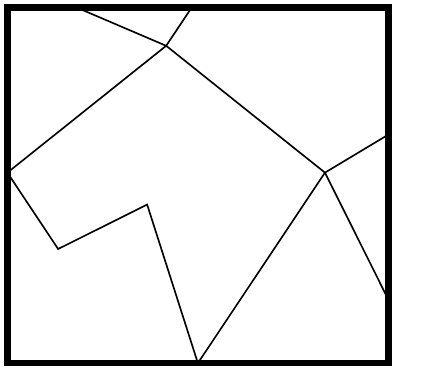_t}} \\
 \scalebox{0.8}{\input{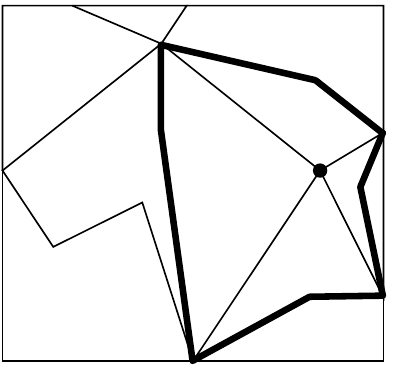_t}} &
 \scalebox{0.8}{\input{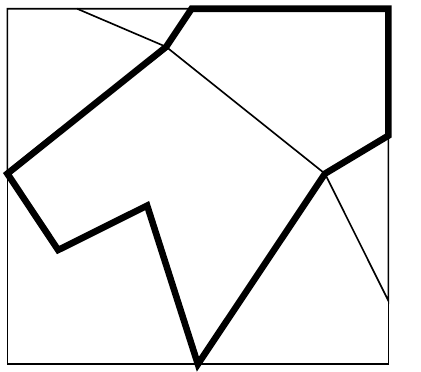_t}} &
 \scalebox{0.8}{\input{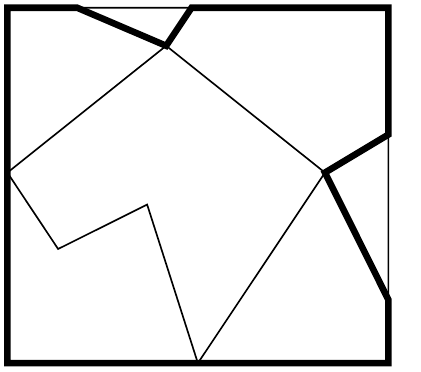_t}}
 \end{tabular}
 \caption{Example of a mesh and neighbourhoods of nodes, edges and elements}
 \label{fig:neighbourhoods}
\end{figure}
We have to consider neighbourhoods of nodes, edges and elements for a proper definition of a quasi-interpolation operator. At the current point, we want to give a preview and prove some properties. These neighbourhoods are open sets and they are defined as element patches by
\begin{align}
 \overline\omega_z &= \bigcup_{z\in \mathcal N(K')}\overline{K'}, & 
 \overline\omega_E &= \bigcup_{\mathcal N(E)\cap \mathcal N(K')\neq\varnothing}\hspace{-2ex}\overline{K'}, & 
 \overline\omega_K &= \bigcup_{\mathcal N(K)\cap \mathcal N(K')\neq\varnothing}\hspace{-2ex}\overline{K'}\label{eq:neighbourhoods}\\
 \overline{\widetilde\omega}_z &= \bigcup_{T\in\mathcal T_h(\Omega):z\in\overline T}\hspace{-1ex}\overline{T}, & 
 \overline{\widetilde\omega}_E &= \bigcup_{E\in\mathcal E(K')}\overline{K'}, & 
 \overline{\widetilde\omega}_K &= \bigcup_{\mathcal E(K)\cap \mathcal E(K')\neq\varnothing}\hspace{-2ex}\overline{K'}\label{eq:neighbourhoods2}
\end{align}
for $z\in\mathcal N_h$, $E\in\mathcal E_h$ and $K\in\mathcal K_h$, see Figure~\ref{fig:neighbourhoods}. 
An important role play the neighbourhoods~$\omega_z$ and $\widetilde\omega_z$ of a node~$z$. Their diameters are denoted by~$h_{\omega_z}$ and $h_{\widetilde\omega_z}$. Furthermore, $\omega_z$ is of comparable size to~$K\subset\omega_z$ as shown in 
\begin{lemma}\label{lem:propertiesMesh}
 Let $\mathcal K_h$ be regular. Then, the mesh fulfils:
 \begin{enumerate}
  \item The number of nodes and edges per element is uniformly bounded, \\i.e. $\vert\mathcal N(K)\vert=\vert\mathcal E(K)\vert\leq c,\quad\forall K\in\mathcal K_h$.
  \item Every node belongs to finitely many elements, \\i.e. $\vert\{K\in\mathcal K_h:z\in\mathcal N(K)\}\vert\leq c$ $\forall z\in\mathcal N_h$.
  \item Each element is covered by a uniformly bounded number of neighbourhoods of elements, i.e. $\vert\{K^\prime\in\mathcal K_h:K\subset \omega_{K^\prime}\}\vert \leq c,\quad\forall K\in\mathcal K_h$.
  \item For all $z\in\mathcal N_h$ and $K\subset\omega_z$, it is $h_{\omega_z}\leq ch_K$.
 \end{enumerate}
 The generic constant $c>0$ only depends on $\sigma_\mathcal{K}$ and $c_\mathcal{K}$ from Definition~\ref{def:regularMesh}.
\end{lemma}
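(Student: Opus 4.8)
I would prove the four assertions in the order listed, since the last two rest on the first two, and the common thread is to control everything through angles at the relevant point together with the edge condition $h_K\le c_\mathcal K h_E$ of Definition~\ref{def:regularMesh}. For the first assertion I would look at the apex angles $\gamma_E$ at $z_K$ of the auxiliary triangles $T_E$, $E\in\mathcal E(K)$. As $z_K$ lies in the interior of the star-shaped element, these angles sum to $2\pi$, so it suffices to bound each $\gamma_E$ from below. Writing $|T_E|=\tfrac12|z_K-z_b||z_K-z_e|\sin\gamma_E\le\tfrac12 h_K^2\sin\gamma_E$ and, via \eqref{eq:boundAreaOfTE} and Definition~\ref{def:regularMesh}, $|T_E|\ge\tfrac12 h_E\rho_K\ge\tfrac12 h_K\rho_K/c_\mathcal K$, one obtains $\sin\gamma_E\ge(c_\mathcal K\sigma_\mathcal K)^{-1}$ and hence a positive lower bound $\gamma_E\ge\gamma_0$ depending only on $\sigma_\mathcal K,c_\mathcal K$ (an obtuse $\gamma_E$ is automatically large). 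Since $|\mathcal N(K)|=|\mathcal E(K)|$ equals the number of triangles, the first assertion follows with $c=2\pi/\gamma_0$. The second assertion is the same count performed at a node $z$: every element $K$ with $z\in\mathcal N(K)$ has $z$ as a genuine vertex, and by Lemma~\ref{lem:IsoscelesTriangles} its interior angle at $z$—the sum of the two base angles of the adjacent auxiliary triangles—is at least $\alpha_\mathcal K$; as these interior angles sum to at most $2\pi$, at most $2\pi/\alpha_\mathcal K$ elements can meet at $z$.

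For the third assertion I would first note that, because the elements are non-overlapping closed sets of positive area, $K\subset\omega_{K'}$ holds exactly when $K$ is one of the patches forming $\omega_{K'}$, i.e. exactly when $K$ and $K'$ share a node. Thus $\{K':K\subset\omega_{K'}\}$ is precisely the set of elements sharing a node with $K$, and its cardinality is bounded by combining the first two assertions: $K$ has at most $c$ nodes, each belonging to at most $c$ elements, so at most $c^2$ elements share a node with $K$.

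For the last assertion I would use that every $K'\subset\omega_z$ contains $z$, so each point of $\omega_z$ lies within $h_{K'}\le\max_{K'\subset\omega_z}h_{K'}$ of $z$, whence $h_{\omega_z}\le 2\max_{K'\subset\omega_z}h_{K'}$. It then remains to show that all elements of $\omega_z$ have mutually comparable diameters. The elementary local fact is that two elements sharing a full edge $E$ have comparable diameters, since $h_K\le c_\mathcal K h_E\le c_\mathcal K h_{K'}$ and symmetrically. The \emph{main obstacle} is to confirm, in the presence of hanging nodes, that the elements of $\omega_z$ can be enumerated so that consecutive ones share such an edge through $z$: a node lying in the relative interior of an edge of some element $L$ is not a vertex of $L$, so $L\notin\omega_z$ and $L$ only occupies a straight angle at $z$. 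Checking that at most one such straight-angle gap can occur (two would already exhaust the $2\pi$ around $z$) shows that the elements of $\omega_z$ form a single fan of angularly consecutive elements, each consecutive pair sharing an edge containing $z$. Chaining the edgewise comparison along this fan—whose length is bounded by the second assertion—gives $\max_{K'\subset\omega_z}h_{K'}\le c\,h_K$, and therefore $h_{\omega_z}\le c\,h_K$ with $c$ depending only on $\sigma_\mathcal K$ and $c_\mathcal K$.
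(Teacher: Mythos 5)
Your proof is correct. For items 2--4 it coincides with the paper's own route: the paper proves these by appeal to~\cite{Weisser2011}, relying on Lemma~\ref{lem:IsoscelesTriangles} exactly as you do --- interior angles of elements at a node are bounded below by $\alpha_\mathcal{K}$ and sum to at most $2\pi$ (item 2), item 3 follows by combining items 1 and 2, and item 4 follows by chaining the comparison $h_{K'}\le c_\mathcal{K}h_E\le c_\mathcal{K}h_{K''}$ across the edges of the fan of elements around $z$, whose length is controlled by item 2. Incidentally, your ``main obstacle'' in item 4 is vacuous under the paper's mesh conventions: since $\mathcal N(E)=\{z_b,z_e\}$ are the \emph{only} nodes on an edge, a node can never lie in the relative interior of an edge of a neighbouring element (hanging nodes are absorbed as vertices with straight interior angle), so interior edges are shared exactly and the fan structure you need is automatic. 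The one place you genuinely depart from the paper is item 1, the only item proved explicitly there. You count angles at $z_K$, bounding each apex angle $\gamma_E$ from below via $|T_E|\le\tfrac12 h_K^2\sin\gamma_E$ together with~\eqref{eq:boundAreaOfTE}, and then use that these angles sum to $2\pi$. The paper instead counts areas: by~\eqref{eq:boundAreaOfTE} and Definition~\ref{def:regularMesh}, each auxiliary triangle satisfies $|T_E|\ge\tfrac12 h_E\rho_K\ge h_K^2/(2c_\mathcal{K}\sigma_\mathcal{K})$, and since the $|\mathcal E(K)|$ triangles tile $K$ and $|K|\le h_K^2$, one obtains $|\mathcal N(K)|=|\mathcal E(K)|\le 2c_\mathcal{K}\sigma_\mathcal{K}$ in one line. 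Both arguments hinge on the same inequality~\eqref{eq:boundAreaOfTE}; the paper's version is shorter, needs no acute/obtuse case distinction, and gives an explicit constant, while yours produces as a by-product a uniform lower bound on the apex angles of $\mathcal T_h(K)$, a fact in the spirit of Lemmas~\ref{lem:regAuxTria} and~\ref{lem:IsoscelesTriangles} that the paper establishes separately.
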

\begin{proof}
 The properties \textit{2, 3} and \textit{4} are proven as in~\cite{Weisser2011} and rely on Lemma~\ref{lem:IsoscelesTriangles}, which guaranties that all interior angles of the polygonal elements are bounded from below by some uniform angle~$\alpha_\mathcal{K}$.

 To see the first property, we exploit the regularity of the mesh. Let $K\in\mathcal K_h$. In 2D it is obviously $|\mathcal N(K)|=|\mathcal E(K)|$. With the help of~\eqref{eq:boundAreaOfTE}, we obtain
 \begin{eqnarray*}
  h_K^2|\mathcal N(K)|
  & \leq & \sigma_\mathcal{K}\rho_\mathcal{K}\,h_K|\mathcal E(K)|\\
  & \leq & \sigma_\mathcal{K}\rho_\mathcal{K}\sum_{E\in\mathcal E(K)}c_\mathcal{K}h_E\\
  & \leq & \sigma_\mathcal{K}c_\mathcal{K}\sum_{E\in\mathcal E(K)}2|T_E|\\
  & = & 2\sigma_\mathcal{K}c_\mathcal{K}\,|K|\\
  & \leq & 2\sigma_\mathcal{K}c_\mathcal{K}\,h_K^2
 \end{eqnarray*}
 Consequently, we have $|\mathcal N(K)|\leq 2\sigma_\mathcal{K}c_\mathcal{K}$.

\end{proof}

\subsection{Trefftz-like trial space}
In order to define a conforming, discrete space $V_h^k\subset V$ over the polygonal decomposition~$\mathcal K_h$, we make use of local Trefftz-like trial functions. 
We give a brief review of the approach in~\cite{Weisser2014}, which yields an approximation space of order $k\in\mathbb N$.
The discrete space is constructed by prescribing its basis functions that are subdivided into nodal, edge and element basis functions. Each of them is locally (element-wise) the unique solution of a boundary value problem.

The nodal functions $\psi_z$, $z\in\mathcal N_h$ are defined by
\begin{gather*}
 -\Delta \psi_z=0\quad\mbox{in } K\quad\mbox{for all } K\in\mathcal K_h,\\
 \psi_z(x) = \begin{cases}1 & \mbox{for } x=z, \\ 0 & \mbox{for } x\in\mathcal N_h\setminus\{z\},\end{cases}\\
 \psi_z \mbox{ is linear on each edge of the mesh}.
\end{gather*}
Let $p_{E,0} = \psi_{z_b}|_E$ and $p_{E,1} = \psi_{z_e}|_E$ for $E=\overline{z_bz_e}$ and let $\{p_{E,i}:i=0,\ldots,k\}$ be a basis of the polynomial space~$\mathcal P^k(E)$ of order~$k$ over~$E$. Then the edge bubble functions $\psi_{E,i}$ for $i=2,\ldots,k$, $E\in\mathcal E_h$ are defined by
\begin{gather*}
 -\Delta \psi_{E,i}=0\quad\mbox{in } K\quad\mbox{for all } K\in\mathcal K_h,\\
 \psi_{E,i} = \begin{cases}p_{E,i} & \mbox{on } E, \\ 0 & \mbox{on } \mathcal E_h\setminus\{E\}.\end{cases}
\end{gather*}
Finally, the element bubble functions $\psi_{K,i,j}$ for $i=0,\ldots,k-2$ and $j=0,\ldots,i$, $K\in\mathcal K_h$ are defined by
\begin{equation*}
 \begin{aligned}
  -\Delta\psi_{K,i,j} &= p_{K,i,j} &&\mbox{in } K,\\
  \psi_{K,i,j} &= 0 &&\mbox{else},
 \end{aligned}
\end{equation*}
where $\{p_{K,i,j}: i=0,\ldots,k-2 \mbox{ and } j=0,\ldots,i\}$ is a basis of the polynomial space~$\mathcal P^{k-2}(K)$ of order~$k-2$ over~$K$.

Due to the regularity of the local problems each basis function~$\psi$ fulfills $\psi\in C^2(K)\cap C^0(\overline K)$ for convex $K\in\mathcal K_h$. In the case of non-convex elements, the definitions are understood in the weak sense, but we still have $\psi\in H^1(K)\cap C^0(\overline K)$. Consequently, it is $\psi\in H^1(\Omega)$ and we set
\[V_h^k = V_{h,1}^k\oplus V_{h,2}^k \subset V,\]
where
\[V_{h,1}^k = \Span\{\psi_z, \psi_{E,i}: i=2,\ldots,k, z\in\mathcal N_h\setminus\mathcal N_{h,D}, E\in\mathcal E_h\setminus\mathcal E_{h,D}\}\]
contains the locally (weakly) harmonic trial functions and
\[V_{h,2}^k = \Span\{\psi_{K,i,j}: i=0,\ldots,k-2 \mbox{ and } j=0,\ldots,i, K\in\mathcal K_h\}\]
contains the trial functions vanishing on all edges. It can be seen that the restriction of $V_h^k$ to an element $K\in\mathcal K_h$, which does not touch the Dirichlet boundary, fulfills
\[V_h^k|_K = \{v\in H^1(K): \Delta v\in\mathcal P^{k-2}(K)\mbox{ and } v|_{\partial K}\in\mathcal P_\mathrm{pw}^k(\partial K)\},\]
where 
\[\mathcal P_\mathrm{pw}^k(\partial K)=\{p\in C^0(\partial K): p|_E\in\mathcal P^k(E), E\in\mathcal E(K)\}.\]

\subsection{Discrete variational formulation}
\label{subsec:DiscVF}
With the help of the conforming approximation space~$V_h^k\subset V$ over polygonal meshes, we can give the discrete version of the variational formulation~\eqref{eq:VF}. Thus, we obtain
\begin{equation}\label{eq:discVF}
 \mbox{Find } u_h\in g_D+V_h^k:\quad b(u_h,v_h) = (f,v_h)_\Omega+(g_{N},v_h)_{\Gamma_N}\quad \forall v_h\in V_h^k.
\end{equation}
For simplicity, we assume that $g_D\in\mathcal P_\mathrm{pw}^k(\partial \Omega)$, such that its extension into $H^1(\Omega)$ can be chosen as interpolation using nodal and edge basis functions.

A closer look at the trial functions enables a simplification of~\eqref{eq:discVF}. Since the nodal and edge basis functions~$\psi$ are (weakly) harmonic on each element, they fulfill
\begin{equation}\label{eq:WeaklyHarmonic}
 (\nabla \psi,\nabla v)_K = 0 \quad\forall v\in H_0^1(K),
\end{equation}
and especially $(\nabla \psi,\nabla \psi_K)_K = 0$ for $\psi_K\in V_{h,2}^k$, because of $\psi_K\in H^1_0(K)$. Since the diffusion coefficient is assumed to be piecewise constant, the variational formulation decouples. Thus, the discrete problem~\eqref{eq:discVF} with solution $u_h=u_{h,1}+u_{h,2}\in V_h^k$ is equivalent to
\begin{equation}\label{eq:discVF1}
 \mbox{Find } u_{h,1}\in g_D+V_{h,1}^k:\quad b(u_{h,1},v_h) = (f,v_h)_\Omega+(g_{N},v_h)_{\Gamma_N}\quad \forall v_h\in V_{h,1}^k,
\end{equation}
and
\begin{equation}\label{eq:discVF2}
 \mbox{Find } u_{h,2}\in V_{h,2}^k:\quad b(u_{h,2},v_h) = (f,v_h)_\Omega\quad \forall v_h\in V_{h,2}^k.
\end{equation}
Moreover, \eqref{eq:discVF2} reduces to a problem on each element since the support of the element bubble functions are restricted to one element. Consequently, $u_{h,2}$ can be computed in a preprocessing step on an element level. A further observation is, that in the case of vanishing source term~$f$, equation~\eqref{eq:discVF2} yields $u_{h,2}=0$. Therefore, it is sufficient to consider the discrete variational formulation~\eqref{eq:discVF1} for the homogeneous diffusion equation.

In~\cite{Weisser2014}, it has been shown that the discrete variational formulation with trial space $V_h^k$ yields optimal rates of convergence for uniform mesh refinement under classical regularity assumptions on the boundary value problem. More precisely, we have
\[\Vert u-u_h\Vert_{1,\Omega} \leq c\,h^k\,\vert u\vert_{k+1,\Omega} \quad\mbox{for } u\in H^{k+1}(\Omega),\]
and additionally, for $H^2$-regular problems,
\[\Vert u-u_h\Vert_{0,\Omega} \leq c\,h^{k+1}\,\vert u\vert_{k+1,\Omega} \quad\mbox{for } u\in H^{k+1}(\Omega),\]
where $h=\max\{h_K:K\in\mathcal K_h\}$ and the constant~$c$ only depends on the mesh parameters given in Definition~\ref{def:regularMesh} as well as on~$k$.

\subsection{BEM-based approximation}
\label{subsec:BEMbasedApproximationI}
In this subsection we briefly discuss how to deal with the implicitly defined trial functions. More details can be found in the publications~\cite{RjasanowWeisser2012,Weisser2014}, for example. We focus on a local problem of the following form
\begin{equation}\label{eq:Laplace}
 \begin{aligned}
  -\Delta\psi &= p_K &&\mbox{in } K,\\
  \psi &= p_{\partial K} &&\mbox{on } \partial K,
 \end{aligned}
\end{equation}
where $p_K\in\mathcal P^{k-2}(K)$, $p_{\partial K}\in\mathcal P_\mathrm{pw}^k(\partial K)$ and $K\in\mathcal K_h$ is an arbitrary element. 

Let $\gamma_0^K:H^1(K)\to H^{1/2}(\partial K)$ be the trace operator and denote by $\gamma_1^K$ the conormal derivative, which maps the solution of~\eqref{eq:Laplace} to its Neumann trace on the boundary~$\partial K$, see~\cite{McLean2000}. For $v\in H^1(K)$ with $\Delta v$ in the dual of $H^1(K)$, $\gamma_1^Kv$ is defined as unique function in $H^{-1/2}(\partial K)$ which satisfies Greens first identity such that
\begin{equation*}
  \int_{K}\nabla v\cdot\nabla w = 
   \int_{\partial K}\gamma_1^K v\gamma_0^Kw - \int_{K}w\Delta v
\end{equation*}
for $w\in H^1(K)$. If $v$ is smooth, e.g. $v\in H^2(K)$, we have
\[
 \gamma_0^Kv(x) = v(x)
 \quad\mbox{and}\quad
 \gamma_1^Kv(x) = n_K(x)\cdot\gamma_0^K\nabla v(x)
 \quad\mbox{for } x\in\partial K,
\]
where $n_K(x)$ denotes the outer normal vector of the domain $K$ at $x$.  

Without loss of generality, we only consider the Laplace equation, i.e. $p_K=0$ in~\eqref{eq:Laplace}. In the case of a general $\psi\in V_h^k$ with $-\Delta\psi=p_K$, we can always construct a polynomial $q\in\mathcal P^k(K)$, see~\cite{KarachikAntropova2010}, such that $-\Delta(\psi-q)=0$ in~$K$ and $\psi-q\in\mathcal P_\mathrm{pw}^k(\partial K)$ on~$\partial K$. Due to the linearity of $\gamma_1^K$, it is
\[\gamma_1^K\psi = \gamma_1^K(\psi-q) + \gamma_1^Kq\]
with $\gamma_1^Kq\in\mathcal P_\mathrm{pw}^{k-1}(\partial K)$. Thus, $\gamma_1^K\psi$ can be expressed as conormal derivative of a (weakly) harmonic function and a piecewise polynomial. Consequently, we reduced the general case to the Laplace problem.

It is well known, that the solution of~\eqref{eq:Laplace} with $p_K=0$ has the representation
\begin{equation}\label{eq:RepresentationFormula}
 \psi(x)=\int_{\partial K}U^*(x,y)\gamma_1^K \psi(y)\,ds_y - \int_{\partial K}\!\gamma_{1,y}^K U^*(x,y) \gamma_0^K \psi(y) \,ds_y \quad\mbox{for } x\in K,
\end{equation}
where $U^*$ is the fundamental solution of minus the Laplacian with
\[U^*(x,y) = -\frac{1}{2\pi} \ln|x-y|\quad \mbox{for } x,y\in\mathbb{R}^2,\]
see, e.g.,~\cite{McLean2000}. 
The Dirichlet trace~$\gamma_0^K\psi=p_{\partial K}$ is known by the problem, whereas the Neumann trace~$\gamma_1^K\psi$ is an unknown quantity. Taking the trace of~\eqref{eq:RepresentationFormula} yields the boundary integral equation
\begin{equation}\label{eq:BIE}
 \mathbf V_K\gamma_1^K\psi = \left(\tfrac{1}{2}\mathbf I+\mathbf K_K\right)p_{\partial K},
\end{equation}
with the single-layer potential operator
\[(\mathbf V_K \vartheta)(x) = \gamma^K_0\int_{\partial K}\!U^*(x,y)\vartheta(y)\,ds_y \quad\mbox{for }\vartheta\in H^{-1/2}(\partial K),\]
and the double-layer potential operator
\[(\mathbf K_K \xi)(x) = \lim_{\varepsilon\to 0}\int_{y\in\partial K:|y-x|\geq\varepsilon}\hspace{-0.7cm}\gamma_{1,y}^K U^*(x,y)\xi(y)\,ds_y \quad\mbox{for } \xi\in H^{1/2}(\partial K),\]
where $x\in\partial K$. To obtain an approximation of the Neumann trace a Galerkin scheme is applied to~\eqref{eq:BIE}, which only involves integration over the boundary of the element, see, e.g.,~\cite{Steinbach2007}. Thus, we approximate $\gamma_1^K\psi$ by $\widetilde{\gamma_1^K\psi}\in\mathcal P_\mathrm{pw,d}^{k-1}(\partial K)$ such that
\begin{equation}\label{eq:discVFofBIE}
 \left(\mathbf V_K\widetilde{\gamma_1^K\psi},q\right)_{\partial K} = \left(\left(\tfrac{1}{2}\mathbf I+\mathbf K_K\right)p_{\partial K},q\right)_{\partial K} \quad\forall q\in\mathcal P_\mathrm{pw,d}^{k-1}(\partial K),
\end{equation}
where
\[\mathcal P_\mathrm{pw,d}^{k-1}(\partial K)=\{q\in L_2(\partial K): q|_E\in\mathcal P^{k-1}(E), E\in\mathcal E(K)\}.\]
The discrete formulation~\eqref{eq:discVFofBIE} has a unique solution. The approximation described here is very rough, since no further discretization of the edges is performed. For $k=1$, the Neumann trace is approximated by a constant on each edge~$E\in\mathcal E(K)$, for example. The resulting boundary element matrices are dense, but, they are rather small, since the number of edges per element is bounded by Lemma~\ref{lem:propertiesMesh}. Consequently, the solution of the system of linear equations is approximated by an efficient LAPACK routine. 
The boundary element matrices of different elements are independent of each other. Therefore, they are computed in parallel during a preprocessing step for the overall simulation. Furthermore, once computed, they are used throughout the computations.

\subsection{Approximated discrete variational formulation}
\label{subsec:ApproxDiscVF}
It remains to discuss the approximations of the terms
\[ b(\psi,\varphi), \quad (f,\varphi)_\Omega \quad\mbox{and}\quad (g_N,\varphi)_{\Gamma_N} \quad\mbox{for } \psi,\varphi\in V_h^k\]
in the variational formulation~\eqref{eq:discVF1}-\eqref{eq:discVF2}. 
For the approximation of the first term~$b(\psi,\varphi)$, we apply Greens identity locally. Let $\psi\in V_h^k$, $v\in V$ and $a(x) = a_K$ on $K\in\mathcal K_h$. Greens first identity yields
\begin{equation}\label{eq:ExactBilinearForm}
 b(\psi,v) 
 = \sum_{K\in\mathcal K_h}a_K(\nabla\psi,\nabla v)_K 
 = \sum_{K\in\mathcal K_h}a_K\left\{(\gamma_1^K\psi,\gamma_0^K v)_{\partial K}
                                    -(\Delta\psi,v)_K\right\}.
\end{equation}
The approximation of this bilinear form is defined by
\begin{equation}\label{eq:ApproxBilinearForm}
 b_h(\psi,v) 
 = \sum_{K\in\mathcal K_h}a_K\left\{(\widetilde{\gamma_1^K\psi},\gamma_0^K v)_{\partial K}
                                    -(\Delta\psi,\widetilde{v})_K\right\},
\end{equation}
where $\widetilde{\gamma_1^K\psi}\in\mathcal P_\mathrm{pw,d}^{k-1}(\partial K)$ is the approximation coming from the BEM and $\widetilde v$ is an appropriate polynomial approximation of $v$ over $K$ as, i.e., the averaged Taylor polynomial in Lemma~(4.3.8) of~\cite{BrennerScott2002}. If $b_h(\psi,\varphi)$ is evaluated for $\psi,\varphi\in V_{h,1}^k$, we obviously end up with only boundary integrals, where polynomials are integrated. For $\psi,\varphi\in V_{h,2}^k$, the volume integrals remain. These integrals can also be evaluated analytically, since the integrand is a polynomial of order~$2k-2$.

The volume integral $(f,\varphi)_\Omega$ is treated in a similar way as $(\Delta\psi,v)_K$. Let $\widetilde\varphi$ be a piecewise polynomial approximation of~$\varphi$, then we use as approximation $(f,\widetilde\varphi)_\Omega$. The resulting integral is treated over each polygonal element by numerical quadrature. It is possible to utilize an auxiliary triangulation for the numerical integration over the elements or to apply a quadrature scheme for polygonal domains directly, see~\cite{MousaviSukumar2011}. 

The last integral~$(g_N,\varphi)_{\Gamma_N}$ is treated by means of Gaussian quadrature over each edge within the Neumann boundary. The data~$g_N$ is given and the trace of the shape functions is known explicitly by their definition. 

Finally, the approximated discrete variational formulation for $u_h=u_{h,1}+u_{h,2}\in V_h^k$ is given as
\begin{equation}\label{eq:approxDiscVF1}
 \mbox{Find } u_{h,1}\in g_D+V_{h,1}^k:\quad b_h(u_{h,1},v_h) = (f,\widetilde v_h)_\Omega+(g_{N},v_h)_{\Gamma_N}\quad \forall v_h\in V_{h,1}^k,
\end{equation}
and
\begin{equation}\label{eq:approxDiscVF2}
 \mbox{Find } u_{h,2}\in V_{h,2}^k:\quad b_h(u_{h,2},v_h) = (f,\widetilde v_h)_\Omega\quad \forall v_h\in V_{h,2}^k.
\end{equation}

\begin{remark}
The approximation of $(\Delta\psi,v)_K$ by $(\Delta\psi,\widetilde v)_K$ and $(f,v)_K$ by $(f,\widetilde v)_K$ with a polynomial~$\widetilde v$ can be interpreted as numerical quadrature. In consequence, we can directly approximate $(\Delta\psi,v)_K$ by an appropriate quadrature rule in the computational realization without the explicit construction of~$\widetilde v$. In this case the representation formula~\eqref{eq:RepresentationFormula} is used to evaluate the shape functions inside the elements. 
\end{remark}

\begin{remark}
The representation~\eqref{eq:ApproxBilinearForm} is advantageous for the a posteriori error analysis in the following, where we indeed consider $b_h(\psi,v)$ for $v\in V$. In the computational realization, however, we are only interested in $v=\varphi\in V_h^k$. Consequently, one might use for $\psi,\varphi\in V_{h,2}^k$ with $\supp(\psi)=\supp(\varphi)=K$ homogenization. Therefore, let $q\in\mathcal P^{k}(K)$ such that $\Delta\varphi=\Delta q$. This time, Greens identity yields
\[
 b(\psi,\varphi) 
 = a_K\left((\gamma_1^K\psi,\gamma_0^Kq)_{\partial K} - (\Delta\psi,q)_K\right),
\]
since $(\nabla(\varphi-q),\nabla\psi)_K = 0$ due to~\eqref{eq:WeaklyHarmonic}. In this formulation, the volume integral can be evaluated analytically and there is no need to approximate it. But, we have used the properties of the basis functions. For the approximation of the boundary integral $(\gamma_1^K\psi,\gamma_0^Kq)_{\partial K}$ one might even use an improved strategy compared to the one described above, see~\cite{RjasanowWeisser2012}.
\end{remark}

\section{Quasi-interpolation}
\label{sec:QuasiInterpolation}
In the case of smooth functions like in $H^2(\Omega)$, it is possible to use nodal interpolation. Such interpolation operators are constructed and studied in~\cite{Weisser2014}, and they yield optimal orders of approximation. The goal of this section, however, is to define interpolation for general functions in $H^1(\Omega)$. Consequently, quasi-interpolation operators are applied, which utilizes the neighbourhoods $\omega_z$ and $\widetilde\omega_z$ defined in~\eqref{eq:neighbourhoods} and \eqref{eq:neighbourhoods2}. Let $\omega\subset\Omega$ be a simply connected domain. With the help of the $L_2$-projection $Q_\omega:L_2(\omega)\to\mathbb R$ into the space of constants, we introduce the quasi-interpolation operators $\mathfrak I_h,\widetilde{\mathfrak I}_h:V\to V_h^1$ as
\[
 \mathfrak I_hv = \sum_{z\in\mathcal N_h\setminus\mathcal N_{h,D}}(Q_{\omega_z}v)\psi_z 
 \qquad\mbox{and}\qquad
 \widetilde{\mathfrak I}_hv = \sum_{z\in\mathcal N_h\setminus\mathcal N_{h,D}}(Q_{\widetilde\omega_z}v)\psi_z 
\]
for $v\in V$. The definition of $\mathfrak I_h$ is a direct generalization of the quasi-interpolation operator in~\cite{Weisser2011} and $\widetilde{\mathfrak I}_h$ is a small modification.
The definition is very similar to the one of Cl\'ement~\cite{Clement1975}. The major difference is the use of non-polynomial trial functions on polygonal meshes with non-convex elements. 
The quasi-interpolation operator~$\mathfrak I_h$ has been studied in~\cite{Weisser2011} on polygonal meshes with convex elements. In the following, the theory is generalized to meshes with star-shaped elements, which are in general non-convex.

Before we give approximation results for the quasi-interpolation operators, some auxiliary results are reviewed and extended. If no confusion arises, we write $v$ for both the function and the trace of the function on an edge. Since Lemma~\ref{lem:IsoscelesTriangles} guaranties the existence of the isosceles triangles with common angles for non-convex elements in a regular mesh, we can directly use the following lemma proven in~\cite{Weisser2011}.
\begin{lemma}\label{lem:step2}
 Let $\mathcal K_h$ be a regular mesh, $v\in H^1(K)$ for $K\in\mathcal K_h$ and $E\in\mathcal E(K)$. Then it holds
 \[\|v\|_{0,E} \leq c\left\{h_E^{-1/2}\|v\|_{0,T_E^\mathrm{iso}}+h_E^{1/2}\vert v\vert_{1,T_E^\mathrm{iso}}\right\}\]
 with the isosceles triangle $T_E^\mathrm{iso}\subset K$ from Lemma~\ref{lem:IsoscelesTriangles}, where $c$ only depends on~$\alpha_\mathcal{K}$, and thus, on the regularity parameters $\sigma_\mathcal{K}$ and $c_\mathcal{K}$.
\end{lemma}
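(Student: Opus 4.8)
The plan is to reduce the estimate to a single standard trace inequality on a fixed reference triangle, exploiting the fact that all the isosceles triangles $T_E^\mathrm{iso}$ are similar to one another. By Lemma~\ref{lem:IsoscelesTriangles} each $T_E^\mathrm{iso}$ has longest side $E$ and two base angles equal to the uniform angle $\alpha_\mathcal{K}$; hence its shape is completely determined by $\alpha_\mathcal{K}$, and any two such triangles differ only by a scaling factor (namely $h_E$) composed with a rigid motion. I would therefore fix once and for all a reference isosceles triangle $\hat T$ with base $\hat E$ of unit length and base angles $\alpha_\mathcal{K}$, and let $F_E\colon\hat T\to T_E^\mathrm{iso}$ be the affine map carrying $\hat E$ onto $E$, whose linear part is $h_E$ times an orthogonal matrix.

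On the fixed domain $\hat T$ the classical trace theorem supplies a constant $\hat c=\hat c(\alpha_\mathcal{K})$ with
\[
 \|\hat v\|_{0,\hat E}\leq \hat c\left(\|\hat v\|_{0,\hat T}+|\hat v|_{1,\hat T}\right)\qquad\text{for all }\hat v\in H^1(\hat T).
\]
Because $\hat T$ is a single triangle depending only on $\alpha_\mathcal{K}$, this constant is automatically uniform over the whole mesh.

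It then remains to transport this inequality back to $T_E^\mathrm{iso}$ via the change of variables $v=\hat v\circ F_E^{-1}$. The scaling rules are the elementary ones for an affine map with linear part of modulus $h_E$: the one-dimensional measure on the edge scales by $h_E$, the area by $h_E^2$, and each derivative by $h_E^{-1}$. Thus $\|v\|_{0,E}^2=h_E\|\hat v\|_{0,\hat E}^2$, $\|v\|_{0,T_E^\mathrm{iso}}^2=h_E^2\|\hat v\|_{0,\hat T}^2$ and $|v|_{1,T_E^\mathrm{iso}}^2=|\hat v|_{1,\hat T}^2$. Substituting these into the reference inequality, multiplying through by the appropriate power of $h_E$, and finally using $\sqrt{a+b}\leq\sqrt a+\sqrt b$ produces exactly
\[
 \|v\|_{0,E}\leq c\left(h_E^{-1/2}\|v\|_{0,T_E^\mathrm{iso}}+h_E^{1/2}|v|_{1,T_E^\mathrm{iso}}\right),
\]
with $c$ depending only on $\hat c$, hence only on $\alpha_\mathcal{K}$, and thus on $\sigma_\mathcal{K}$ and $c_\mathcal{K}$.

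The scaling computation is the bulk of the work but presents no real difficulty. The one point that requires care — and the step I regard as the conceptual heart — is the justification of uniformity: one must observe that the \emph{shape}, not merely the aspect ratio, of $T_E^\mathrm{iso}$ is frozen by the single parameter $\alpha_\mathcal{K}$, so that a genuinely fixed reference triangle exists and the trace constant neither degenerates as $h_E\to0$ nor deteriorates when the surrounding element $K$ is non-convex. This is precisely what Lemma~\ref{lem:IsoscelesTriangles} secures, which is why the bound can be lifted unchanged from the convex setting of~\cite{Weisser2011}.
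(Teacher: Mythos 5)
Your proof is correct. The scaling identities are all right: with $F_E$ a similarity of ratio $h_E$, one has $\|v\|_{0,E}^2=h_E\|\hat v\|_{0,\hat E}^2$, $\|v\|_{0,T_E^\mathrm{iso}}^2=h_E^2\|\hat v\|_{0,\hat T}^2$ and $|v|_{1,T_E^\mathrm{iso}}=|\hat v|_{1,\hat T}$ (the $H^1$ semi-norm is scale-invariant in two dimensions), and inserting the trace inequality on the fixed reference triangle $\hat T$ yields exactly the claimed estimate with $c=\hat c(\alpha_\mathcal{K})$. Note, though, that the paper itself does not prove this lemma at all: it invokes the proof given in~\cite{Weisser2011} for meshes with convex elements, and its only new contribution at this point is the observation that Lemma~\ref{lem:IsoscelesTriangles} still furnishes the isosceles triangles $T_E^\mathrm{iso}$ with the common base angle $\alpha_\mathcal{K}$ inside possibly non-convex elements, so that the earlier proof applies verbatim. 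Your argument therefore differs in that it is self-contained rather than cited, but it rests on precisely the same conceptual pivot that the paper (and your last paragraph) identifies: the \emph{shape} of $T_E^\mathrm{iso}$ is frozen by the single parameter $\alpha_\mathcal{K}$, so a genuinely fixed reference triangle exists and the trace constant is uniform over all edges, all elements and all meshes of the family; the non-convexity of $K$ is irrelevant because the estimate never sees $K$, only $T_E^\mathrm{iso}\subset K$. What your route buys is a complete and elementary proof (reference element plus classical trace theorem) that makes the dependence $c=c(\alpha_\mathcal{K})$ explicit; what the paper's route buys is brevity. Two cosmetic remarks: the final appeal to $\sqrt{a+b}\leq\sqrt a+\sqrt b$ is superfluous, since you stated the reference trace inequality already in non-squared form; and it is worth one sentence to note that the trace of $v\in H^1(K)$ on $E$ coincides with the trace of the restriction $v|_{T_E^\mathrm{iso}}\in H^1(T_E^\mathrm{iso})$, which is what licenses working on $T_E^\mathrm{iso}$ alone.
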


Next, we prove an approximation property for the $L_2$-projection.
\begin{lemma}\label{lem:step1}
 Let $\mathcal K_h$ be a regular mesh. There exist uniform constants~$c$, which only depend on the regularity parameters $\sigma_\mathcal{K}$ and $c_\mathcal{K}$, such that for every $z\in\mathcal N_h$, it holds
 \[ \|v-Q_{\omega_z}v\|_{0,\omega_z} \leq ch_{\omega_z}\vert v\vert_{1,\omega_z} \quad\mbox{for } v\in H^1(\omega_z),\]
and
 \[ \|v-Q_{\widetilde\omega_z}v\|_{0,\widetilde\omega_z} \leq ch_{\widetilde\omega_z}\vert v\vert_{1,\widetilde\omega_z} \quad\mbox{for } v\in H^1(\widetilde\omega_z),\]
 where $h_{\omega_z}$ and $h_{\widetilde\omega_z}$ denote the diameter of $\omega_z$ and $\widetilde\omega_z$, respectively.
\end{lemma}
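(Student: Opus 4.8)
The plan is to prove both estimates by one template, exploiting that $Q_{\omega}v$ is nothing but the mean value $\bar v_\omega=|\omega|^{-1}\int_\omega v$ and is therefore the best approximation of $v$ by a constant in $L_2(\omega)$. Hence for \emph{any} constant $c_0\in\mathbb R$ one has $\|v-Q_\omega v\|_{0,\omega}\le\|v-c_0\|_{0,\omega}$, so it suffices to build one convenient $c_0$ and bound $\|v-c_0\|_{0,\omega}$ by $c\,h_\omega|v|_{1,\omega}$. I would take $c_0$ to be the mean value of $v$ over one fixed piece of the patch.

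First I would record the local ingredients. By Lemma~\ref{lem:propertiesMesh} the patches $\omega_z$ and $\widetilde\omega_z$ consist of a uniformly bounded number of pieces, namely full elements $K'\subset\omega_z$ in the first case and auxiliary triangles sharing the vertex $z$ in the second, and, since adjacent pieces share an edge $E$ with $h_K\le c_\mathcal K h_E$, all these pieces have mutually comparable diameters, each comparable to $h_{\omega_z}$ respectively $h_{\widetilde\omega_z}$. On each piece $P$ I have a local Poincar\'e inequality $\|v-\bar v_P\|_{0,P}\le c\,h_P|v|_{1,P}$ with uniform constant: for a full (possibly non-convex) element this follows from star-shapedness with respect to the circle $B(z_K,\rho_K)$ of Definition~\ref{def:regularMesh} together with the chunkiness bound $h_K/\rho_K<\sigma_\mathcal K$, via the averaged Taylor polynomial of Brenner--Scott; for an auxiliary triangle it follows from shape-regularity (Lemma~\ref{lem:regAuxTria}) by scaling to a reference triangle.

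The core of the argument is a chaining estimate for the jump of the piecewise means across a shared edge. If two pieces $P,P'$ share an edge $E$, write $\bar v_E$ for the mean of the trace of $v$ over $E$; then
\[
|\bar v_P-\bar v_E|\le|E|^{-1/2}\|v-\bar v_P\|_{0,E},
\]
and the right-hand side is controlled by combining the trace estimate of Lemma~\ref{lem:step2} (applied to $v-\bar v_P$, whose gradient is $\nabla v$; for an interior spoke of $\widetilde\omega_z$ the standard scaled trace inequality on a shape-regular triangle plays the same role) with the local Poincar\'e inequality and $h_P\le c\,h_E$. This yields $|\bar v_P-\bar v_E|\le c\,|v|_{1,P}$, hence $|\bar v_P-\bar v_{P'}|\le c(|v|_{1,P}+|v|_{1,P'})$. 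Since the patch is connected and its dual graph (pieces as vertices, shared edges as links) is connected with uniformly bounded diameter, any two pieces are joined by a chain of uniformly bounded length, so telescoping and Cauchy--Schwarz give $|\bar v_P-c_0|\le c\,|v|_{1,\omega}$ for the fixed reference mean $c_0$.

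Assembling, $\|v-c_0\|_{0,\omega}^2=\sum_P\|v-c_0\|_{0,P}^2\le 2\sum_P\bigl(\|v-\bar v_P\|_{0,P}^2+|P|\,|\bar v_P-c_0|^2\bigr)$, where the first terms are bounded by $c\,h_P^2|v|_{1,P}^2\le c\,h_\omega^2|v|_{1,P}^2$ and the second by $|P|\,c^2|v|_{1,\omega}^2\le c\,h_\omega^2|v|_{1,\omega}^2$; summing over the bounded number of pieces gives $\|v-c_0\|_{0,\omega}\le c\,h_\omega|v|_{1,\omega}$, and the claim follows by minimality of $Q_\omega v$. The one place demanding genuine care, and thus the main obstacle, is the cross-edge jump bound: the constant must depend \emph{solely} on $\sigma_\mathcal K$ and $c_\mathcal K$, which forces the use of the star-shaped element Poincar\'e constant through the chunkiness parameter rather than convexity, and of Lemma~\ref{lem:step2} rather than a naive trace theorem, while the number of links in every chain must be bounded purely in terms of the mesh regularity via Lemma~\ref{lem:propertiesMesh}.
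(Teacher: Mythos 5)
Your proof is correct, and it takes a genuinely different route from the paper's. The paper reduces both estimates to Proposition~2.10 (Decomposition) of \cite{VeeserVerfuerth2010}; its real work is geometric, namely constructing the required \emph{admissible} decompositions whose designated triangles chain across complete shared sides: for $\widetilde\omega_z$ the auxiliary triangles themselves, for a single element the triangle fan around $z_K$, and for $\omega_z$ a splitting of every element along the polygonal chain through $z$, $z_K$, $z'$ into two subdomains, each verified to be again a regular star-shaped element (the circular-sector computation for $\rho_{\omega_i}$ and the aspect-ratio bound $(1+\sqrt{2})\sigma_\mathcal{K}$), so that the decomposition proposition applies with $n$ equal to twice the number of elements at $z$. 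You instead run the classical chaining argument by hand: uniform Poincar\'e constants on the individual pieces --- imported for the non-convex elements from the Bramble--Hilbert/averaged-Taylor theory for domains star-shaped with respect to a ball \cite{BrennerScott2002}, with constant governed by the chunkiness $h_K/\rho_K<\sigma_\mathcal{K}$, and obtained by scaling for the shape-regular triangles of Lemma~\ref{lem:regAuxTria} --- then a jump-of-means bound across each shared edge via Lemma~\ref{lem:step2} (respectively a scaled trace inequality on interior spokes), combined with $h_K\le c_\mathcal{K}h_E$, telescoping over chains whose length is bounded by Lemma~\ref{lem:propertiesMesh}, and best approximation by constants to conclude. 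Both routes yield constants depending only on $\sigma_\mathcal{K}$ and $c_\mathcal{K}$. The paper's approach buys an explicit constant from the Veeser--Verf\"urth formula and keeps the element-level Poincar\'e bound internal (it is itself derived from the same decomposition proposition, with no recourse to \cite{BrennerScott2002}); your approach buys the avoidance of the delicate two-piece element splitting, since the trace-based jump estimate only requires that adjacent pieces share a mesh edge or spoke, and it recycles Lemma~\ref{lem:step2}, which the paper has already established. The one point you should make explicit in a final write-up is the connectivity claim for $\omega_z$: consecutive elements around $z$ share an edge incident to $z$ because in this framework every node lying on $\partial K$ (hanging nodes included) belongs to $\mathcal N(K)$, so the elements containing $z$ form a fan; together with part~2 of Lemma~\ref{lem:propertiesMesh} this gives the uniformly bounded chain length your telescoping step uses.
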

This result is of interest on its own. It is known that these inequalities hold with the Poincar\'e constant
 \[C_P(\omega) = \sup_{v\in H^1(\omega)} \frac{\Vert v-Q_\omega v\Vert_{0,\omega}}{h_{\omega} \vert v\vert_{1,\omega}} < \infty,\]
which depends on the shape of $\omega$, see~\cite{VeeserVerfuerth2010}. For convex $\omega$, the authors of~\cite{PayneWeinberger1960} showed $C_P(\omega)<1/\pi$. In our situation, however, $\omega_z$ is a patch of non-convex elements which is itself non-convex in general. The lemma says, that we can bound $C(\omega_z)$ and $C(\widetilde\omega_z)$ by a uniform constant~$c$ depending only on the regularity parameters of the mesh.
The main tool in the forthcoming proof is Proposition~2.10~(Decomposition) of~\cite{VeeserVerfuerth2010}. As preliminary of this proposition, an admissible decomposition $\{\omega_i\}_{i=1}^n$ of $\omega$ with pairwise disjoint domains $\omega_i$ and 
 \[\overline\omega = \bigcup_{i=1}^n \overline\omega_i\]
 is needed. Admissible means in this context, that there exist triangles $\{T_i\}_{i=1}^n$ such that $T_i\subset\omega_i$ and for every pair $i,j$ of different indices, there is a sequence $i=k_0,\ldots,k_\ell=j$ of indices such that for every $m$ the triangles $T_{k_{m-1}}$ and $T_{k_m}$ share a complete side. Under these assumptions, the Poincar\'e constant of $\omega$ is bounded by
 \[C_P(\omega) \leq \max_{1\leq i\leq n}\left\{ 8(n-1)\left(1-\min_{1\leq j\leq n}\frac{\vert \omega_j\vert}{\vert\omega\vert}\right) \left(C_P^2(\omega_i)+2C_P(\omega_i)\right) \frac{\vert\omega\vert\,h_{\omega_i}^2}{\vert T_i\vert\,h_{\omega}^2}\right\}^{1/2}.\]
\begin{proof}[Proof (Lemma~\ref{lem:step1})]
The second estimate in the lemma can be seen easily. The neighbourhood $\widetilde\omega_z$ is a patch of triangles, see~\eqref{eq:neighbourhoods2}. Thus, we choose $\omega_i=T_i$, $i=1,\ldots,n$ with $\{T_i\}_{i=1}^n=\{T\in\mathcal T_h(\Omega): z\in\overline T\}$ for the admissible decomposition of~$\widetilde\omega_z$. According to Lemma~\ref{lem:propertiesMesh}, $n$ is uniformly bounded. Furthermore, it is $C_P(\omega_i)<1/\pi$, $|\widetilde\omega_z|<h_{\widetilde\omega_z}^2$ and $h_{\omega_i}^2/|T_i|=h_{T_i}^2/|T_i|\leq c$, because of the shape-regularity of the auxiliary triangulation proven in Lemma~\ref{lem:regAuxTria}. Consequently, applying Proposition~2.10~(Decomposition) of~\cite{VeeserVerfuerth2010} yields $C_P(\widetilde\omega_z)<c$ that proves the second estimate in the lemma.

The first estimate is proven in three steps. First, consider an element~$K$, which fulfills the regularity assumptions of Definition~\ref{def:regularMesh}. Remembering the construction of $\mathcal T_h(K)$, $K$ can be interpreted as patch of triangles~$\widetilde\omega_{z_K}$ corresponding to the point~$z_K$. Arguing as above gives $C_P(K)<c$.

Next, we consider a neighbourhood~$\omega_z$ and apply Proposition~2.10 of~\cite{VeeserVerfuerth2010}. In the second step, to simplify the explanations, we assume that the patch consists of only one element, i.e. $\omega_z=K\in\mathcal K_h$, and let $E_1,E_2\in\mathcal E(K)$ with $z=\overline{E}_1\cap\overline{E}_2$. We decompose $\omega_z$, or equivalently $K$, into $\omega_1$ and $\omega_2$ such that $n=2$. The decomposition is done by splitting $K$ along the polygonal chain through the points $z$, $z_K$ and $z'$, where $z'\in\mathcal N(K)$ is chosen such that the angle $\beta=\angle zz_Kz'$ is maximized, see Figure~\ref{fig:admissibleDecomposition} left. It is $\beta\in(\pi/2,\pi]$, since $K$ is star-shaped with respect to a circle centered at~$z_K$. The triangles $\{T_i\}_{i=1}^n$ are chosen from the auxiliary triangulation in Lemma~\ref{lem:regAuxTria} as $T_i=T_{E_i}\in\mathcal T_h(K)$, cf.~Figure~\ref{fig:admissibleDecomposition} middle. Obviously, $\{\omega_i\}_{i=1}^n$ is an admissible decomposition, since $\{T_i\}_{i=1}^n$ fulfill the preliminaries.
\begin{figure}[htbp]
 \centering
 \scalebox{0.8}{\input{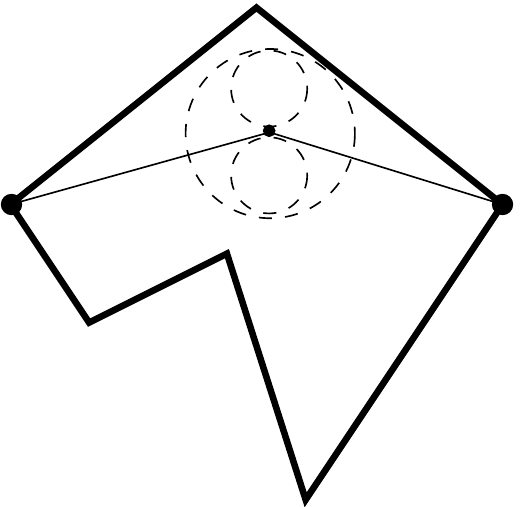_t}}\qquad
 \scalebox{0.8}{\input{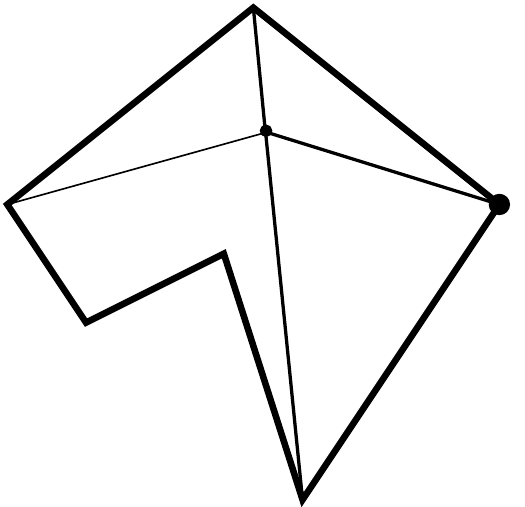_t}}\qquad
 \scalebox{0.8}{\input{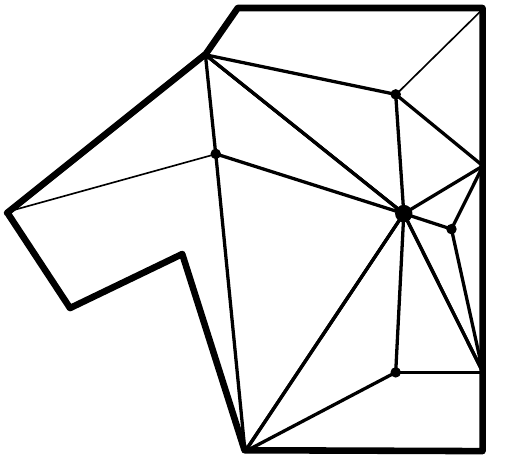_t}}
 \caption{Construction of admissible decomposition for $K$ and $\omega_z$ from Figure~\ref{fig:neighbourhoods}}
 \label{fig:admissibleDecomposition}
\end{figure}

The element~$K$ is star-shaped with respect to a circle of radius~$\rho_K$ and we have split this circle into two circular sectors during the construction of $\omega_i$, $i=1,2$. A small calculation shows that $\omega_i$ is also star-shaped with respect to a circle of radius
\[\rho_{\omega_i}=\frac{\rho_K\sin(\beta/2)}{1+\sin(\beta/2)},\]
which lies inside the mentioned circular sector and fulfills $\rho_K/(1+\sqrt{2})<\rho_{\omega_i}\leq\rho_K/2$, see Figure~\ref{fig:admissibleDecomposition} (left). Thus, the aspect ratio of $\omega_i$ is uniformly bounded, since
\[\frac{h_{\omega_i}}{\rho_{\omega_i}} \leq \frac{(1+\sqrt{2})h_K}{\rho_K} \leq (1+\sqrt{2})\sigma_\mathcal{K}.\]
Furthermore, we observe that $h_{\omega_i}\leq h_K\leq \sigma_\mathcal{K} \rho_K\leq \sigma_\mathcal{K}|\overline{zz_K}|$ and accordingly $h_{\omega_i}\leq \sigma_\mathcal{K}|\overline{z'z_K}|$. Consequently, $\omega_i$, $i=1,2$ is a regular element in the sense of Definition~\ref{def:regularMesh} and, thus, we have already proved that $C_P(\omega_i)\leq c$. Additionally, we obtain by~\eqref{eq:boundAreaOfTE} and by the regularity of the mesh that
\[
 \frac{h_{\omega_i}^2}{|T_i|} 
 \leq \frac{2h_{\omega_i}^2}{h_{E_i} \rho_K} 
 \leq \frac{2h_{K}^2}{h_{E_i} \rho_K} 
 \leq 2c_\mathcal{K}\sigma_\mathcal{K}.
\]
This yields together with $|\omega_z|\leq h_{\omega_z}^2$ and Proposition 2.10 (Decomposition) of~\cite{VeeserVerfuerth2010} that
 \[C_P(\omega_z) \leq \left( 16(n-1)\left(c^2+2c\right) c_\mathcal{K}\sigma_\mathcal{K}\right)^{1/2},\]
and thus, a uniform bound in the case of $\omega_z=K$ and $n=2$.

In the third step, the general case, the patch $\omega_z$ is a union of several elements, see~\eqref{eq:neighbourhoods} and Figure~\ref{fig:neighbourhoods}. In this situation, we proceed the construction for all neighbouring elements of the node~$z$ as in the second step, see Figure~\ref{fig:admissibleDecomposition} (right). Consequently, $n$ is two times the number of neighbouring elements. This number is uniformly bounded according to Lemma~\ref{lem:propertiesMesh}. The resulting decomposition $\{\omega_i\}_{i=1}^n$ is admissible with $\bigcup_{i=1}^n\overline T_i = \overline{\widetilde\omega}_z$ and the estimate of~\cite{VeeserVerfuerth2010} yields $C_P(\omega_z)\leq c$, where $c$ only depends on $\sigma_\mathcal{K}$ and $c_\mathcal{K}$.
\end{proof}

Finally, we formulate approximation properties for the quasi-interpolation operators~$\mathfrak I_h$ and~$\widetilde{\mathfrak I}_h$. The proof is skipped since it is analogous to~\cite{Weisser2011} with the exception, that the generalized Lemmata~\ref{lem:IsoscelesTriangles}, \ref{lem:propertiesMesh}, \ref{lem:step2} and \ref{lem:step1} are applied instead of their counterparts.
\begin{proposition}\label{pro:interpolationProperties}
 Let $\mathcal K_h$ be a regular mesh and let $v\in V$, $E\in\mathcal E_h$ and $K\in\mathcal K_h$.
 Then, it holds
 \[
  \|v-\mathfrak I_hv\|_{0,K} \leq ch_K\vert v\vert_{1,\omega_K},
  \qquad
  \|v-\mathfrak I_hv\|_{0,E} \leq ch_E^{1/2}\vert v\vert_{1,\omega_E},
 \]
 and
 \[
  \|v-\widetilde{\mathfrak I}_hv\|_{0,K} \leq ch_K\vert v\vert_{1,\omega_K},
  \qquad
  \|v-\widetilde{\mathfrak I}_hv\|_{0,E} \leq ch_E^{1/2}\vert v\vert_{1,\omega_E},
 \]
 where the constants $c>0$ only depend on the regularity parameters $\sigma_{\mathcal K}$ and $c_{\mathcal K}$, see Definition~\ref{def:regularMesh}.
\end{proposition}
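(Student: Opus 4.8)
The plan is to follow the classical Cl\'ement--Verf\"urth argument, with the genuinely nonstandard analytic content already isolated in Lemmas~\ref{lem:step1} and~\ref{lem:step2}. The one preliminary I would establish first concerns the implicitly defined nodal functions: although the $\psi_z$ are not polynomials, they are (weakly) harmonic on each element with boundary data the edgewise-linear interpolation of their nodal values, so the weak maximum principle gives $0\le\psi_z\le 1$ on $\overline K$ and $\sum_{z\in\mathcal N(K)}\psi_z\equiv 1$ on $K$. Moreover, on a single edge $E=\overline{z_bz_e}$ every $\psi_z$ with $z\notin\mathcal N(E)$ vanishes, while $\psi_{z_b}|_E+\psi_{z_e}|_E\equiv 1$. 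These two partition-of-unity identities, one on elements and one on edges, drive the whole estimate.

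For the element bound I would write, on $K\in\mathcal K_h$,
\[
 (v-\mathfrak I_hv)|_K=\sum_{z\in\mathcal N(K)\setminus\mathcal N_{h,D}}(v-Q_{\omega_z}v)\psi_z+\sum_{z\in\mathcal N(K)\cap\mathcal N_{h,D}}v\,\psi_z,
\]
using $\sum_{z\in\mathcal N(K)}\psi_z=1$ and $v|_{\Gamma_D}=0$. Since $0\le\psi_z\le1$ and $K\subseteq\omega_z$, each summand of the first sum obeys $\|(v-Q_{\omega_z}v)\psi_z\|_{0,K}\le\|v-Q_{\omega_z}v\|_{0,\omega_z}$, to which Lemma~\ref{lem:step1} applies directly; the Dirichlet terms $\|v\psi_z\|_{0,K}\le\|v\|_{0,\omega_z}$ are controlled by the Friedrichs inequality on $\omega_z$, valid because $v$ vanishes on the nonempty Dirichlet part of $\partial\omega_z$ (this is the only place where $\mathcal N_{h,D}$ enters, and it is handled exactly as in~\cite{Weisser2011}). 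Feeding in $h_{\omega_z}\le c\,h_K$ from item~4 of Lemma~\ref{lem:propertiesMesh}, and then the uniform bound on $|\mathcal N(K)|$ from item~1 together with $\omega_z\subseteq\omega_K$, collapses the sum to $c\,h_K|v|_{1,\omega_K}$.

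For the edge bound I would exploit the edgewise partition of unity to localize to the two endpoint patches, which is precisely what produces $\omega_E$ rather than the larger $\omega_K$ on the right-hand side. On $E$ one has $(v-\mathfrak I_hv)|_E=(v-Q_{\omega_{z_b}}v)\psi_{z_b}+(v-Q_{\omega_{z_e}}v)\psi_{z_e}$ (with the Dirichlet substitution above if an endpoint lies on $\Gamma_D$), whence $\|v-\mathfrak I_hv\|_{0,E}\le\|v-Q_{\omega_{z_b}}v\|_{0,E}+\|v-Q_{\omega_{z_e}}v\|_{0,E}$. To each term I would apply the trace estimate of Lemma~\ref{lem:step2} on the isosceles triangle $T_E^{\mathrm{iso}}\subset K$: the seminorm contribution $h_E^{1/2}|v-Q_{\omega_{z_i}}v|_{1,T_E^{\mathrm{iso}}}=h_E^{1/2}|v|_{1,T_E^{\mathrm{iso}}}$ is bounded by $h_E^{1/2}|v|_{1,\omega_E}$ at once, while the $L_2$ contribution $h_E^{-1/2}\|v-Q_{\omega_{z_i}}v\|_{0,T_E^{\mathrm{iso}}}$ is estimated through Lemma~\ref{lem:step1} and $h_{\omega_{z_i}}\le c\,h_K\le c\,c_\mathcal{K}h_E$ (item~4 of Lemma~\ref{lem:propertiesMesh} and item~3 of Definition~\ref{def:regularMesh}) to give $c\,h_E^{1/2}|v|_{1,\omega_{z_i}}$; since $z_i\in\mathcal N(E)$ forces $\omega_{z_i}\subseteq\omega_E$, both endpoint terms are bounded by $c\,h_E^{1/2}|v|_{1,\omega_E}$.

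The operator $\widetilde{\mathfrak I}_h$ is treated identically, replacing $\omega_z$ by $\widetilde\omega_z$ throughout and invoking the second inequality of Lemma~\ref{lem:step1}; the inclusions $\widetilde\omega_z\subseteq\omega_z\subseteq\omega_K$ and $\widetilde\omega_{z_i}\subseteq\omega_{z_i}\subseteq\omega_E$ needed for the overlap counting follow from~\eqref{eq:neighbourhoods2}. I expect the only real subtleties, everything else being bookkeeping once Lemmas~\ref{lem:step1} and~\ref{lem:step2} are in hand, to be the partition-of-unity properties of the non-polynomial functions $\psi_z$ (which rest on the maximum principle rather than an explicit formula) and the correct localization of the edge estimate to $\omega_E$; the uniform Poincar\'e bound on the non-convex patches, the genuinely new difficulty, has already been absorbed into Lemma~\ref{lem:step1}.
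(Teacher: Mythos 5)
Your strategy coincides with the proof the paper intends: the paper skips the argument precisely because it is the Cl\'ement-type proof of~\cite{Weisser2011} with the generalized Lemmata~\ref{lem:IsoscelesTriangles}, \ref{lem:propertiesMesh}, \ref{lem:step2} and~\ref{lem:step1} substituted for their convex-element counterparts, and your partition-of-unity argument (weak maximum principle for the weakly harmonic~$\psi_z$, element bound through Lemma~\ref{lem:step1} and Lemma~\ref{lem:propertiesMesh}, edge bound through Lemma~\ref{lem:step2} on~$T_E^\mathrm{iso}$ together with $\omega_{z_i}\subseteq\omega_E$, Dirichlet nodes via a uniform Friedrichs inequality as in~\cite{Weisser2011}) is exactly that. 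For~$\mathfrak I_h$ your proof is complete and correct.

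The claim that $\widetilde{\mathfrak I}_h$ is ``treated identically, replacing $\omega_z$ by $\widetilde\omega_z$ throughout'' does, however, contain a genuine gap in the element estimate. Your bound for $\mathfrak I_h$ on $K$ hinges on the inclusion $K\subseteq\omega_z$ for $z\in\mathcal N(K)$, which you use in the step $\|(v-Q_{\omega_z}v)\psi_z\|_{0,K}\le\|v-Q_{\omega_z}v\|_{0,\omega_z}$. The analogue $K\subseteq\widetilde\omega_z$ is false: by~\eqref{eq:neighbourhoods2}, $\widetilde\omega_z$ is only the fan of auxiliary triangles meeting at~$z$, so most of a polygonal element~$K$ lies outside $\widetilde\omega_z$ (see Figure~\ref{fig:neighbourhoods}), and the second inequality of Lemma~\ref{lem:step1} controls $v-Q_{\widetilde\omega_z}v$ only on $\widetilde\omega_z$, not on $K$. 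The estimate remains true, but it needs one extra step you do not mention, namely a comparison of the two averages through the overlap: since $\widetilde\omega_z\subseteq\omega_z$,
\[
 \|v-Q_{\widetilde\omega_z}v\|_{0,K}
 \le \|v-Q_{\omega_z}v\|_{0,\omega_z}
   + |K|^{1/2}\,\bigl|Q_{\omega_z}v-Q_{\widetilde\omega_z}v\bigr|,
 \qquad
 \bigl|Q_{\omega_z}v-Q_{\widetilde\omega_z}v\bigr|
 \le |\widetilde\omega_z|^{-1/2}\,\|v-Q_{\omega_z}v\|_{0,\omega_z},
\]
and the ratio $|K|/|\widetilde\omega_z|$ is uniformly bounded because $|K|\le h_K^2$ while, for any triangle $T_E\in\mathcal T_h(K)$ touching~$z$, $|\widetilde\omega_z|\ge|T_E|\ge\tfrac12 h_E\rho_K\ge h_K^2/(2c_\mathcal{K}\sigma_\mathcal{K})$ by~\eqref{eq:boundAreaOfTE} and Definition~\ref{def:regularMesh}. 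With this, the first inequality of Lemma~\ref{lem:step1} (on~$\omega_z$, not the second one on~$\widetilde\omega_z$) and $h_{\omega_z}\le ch_K$ give $\|v-Q_{\widetilde\omega_z}v\|_{0,K}\le ch_K|v|_{1,\omega_z}$, after which the rest of your argument goes through. Your edge estimate for $\widetilde{\mathfrak I}_h$ is unaffected, since there you only need $T_E^\mathrm{iso}\subseteq T_E\subseteq\widetilde\omega_{z_i}$, which does hold.
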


\section{Residual based error estimate}
\label{sec:ResidualErrorEstimate}
In this section, we formulate the main results for the residual based error estimate and prove its reliability and efficiency. 
This a posteriori error estimate bounds the difference of the exact solution and the Galerkin approximation in the energy norm~$\|\cdot\|_b$ associated to the bilinear form, i.e. $\|\cdot\|_b^2 = b(\cdot,\cdot)$. Among others, the estimate contains the jumps of the conormal derivatives over the element edges. Such a jump over an internal edge $E\in\mathcal E_{h,\Omega}$ is defined by
\[ \llbracket u_h\rrbracket_{E,h} = a_K\widetilde{\gamma_1^K u_h} + a_{K^\prime}\widetilde{\gamma_1^{K^\prime} u_h},\]
where $K,K^\prime\in\mathcal K_h$ are the neighbouring elements of $E$ with $E\in\mathcal E(K)\cap\mathcal E(K^\prime)$. 
The element residual is given by
\[R_K = f+a_K\Delta u_h\quad\mbox{for } K\in\mathcal K_h,\]
and the edge residual by
\[
 R_E = \begin{cases}
         0 & \mbox{for } E\in\mathcal E_{h,D},\\
         g_N-a_K\widetilde{\gamma_1^Ku_h} & \mbox{for } E\in\mathcal E_{h,N}\mbox{ with } E\in\mathcal E(K),\\
         -\frac{1}{2}\llbracket u_h\rrbracket_{E,h} & \mbox{for } E\in\mathcal E_{h,\Omega}.
        \end{cases}
\]

\begin{theorem}[Reliability]\label{th:ReliabilityResErrEst}
 Let $\mathcal K_h$ be a regular mesh. Furthermore, let $u\in g_D+V$ and $u_h\in g_D+V_h^k$ be the solutions of~\eqref{eq:VF} and~\eqref{eq:approxDiscVF1}-\eqref{eq:approxDiscVF2}, respectively.
 Then the residual based error estimate is reliable, i.e.
 \[
  \Vert u-u_h\Vert_b \leq c \:\left\{\eta_R^2+\delta_R^2\right\}^{1/2}\quad\mbox{with}\quad
  \eta_R^2 = \sum_{K\in\mathcal K_h}\eta_K^2
  \quad\mbox{and}\quad
  \delta_R^2 = \sum_{K\in\mathcal K_h}\delta_K^2,
 \]
 where the error indicators are defined by
 \[\eta_K^2 = h_K^2\|R_K\|_{0,K}^2+\sum_{E\in\mathcal E(K)}\;h_E\lVert R_E\rVert_{0,E}^2,\]
 and
 \[\delta_K^2 = \|a_K\gamma_1^Ku_h-a_K\widetilde{\gamma_1^Ku_h}\|_{0,\partial K}^2.\]
 The constant $c>0$ only depends on the regularity parameters $\sigma_{\mathcal K}$, $c_{\mathcal K}$, see Definition~\ref{def:regularMesh}, the approximation order~$k$ and on the diffusion coefficient~$a$.
\end{theorem}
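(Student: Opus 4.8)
The plan is to start from the energy identity $\|u-u_h\|_b^2 = b(u-u_h,u-u_h)$ and to rewrite it as a residual tested against the error. Writing $e=u-u_h$ and inserting the weak formulation~\eqref{eq:VF} for $b(u,e)$ gives $\|e\|_b^2 = (f,e)_\Omega + (g_N,e)_{\Gamma_N} - b(u_h,e)$. Since $e\in V$ vanishes on $\Gamma_D$ (both $u$ and $u_h$ carry the datum $g_D$), the Cl\'ement-type quasi-interpolant $v_h=\mathfrak I_h e\in V_h^1\subset V_h^k$ is an admissible test function that also vanishes on $\Gamma_D$. First I would subtract the discrete equations~\eqref{eq:approxDiscVF1}--\eqref{eq:approxDiscVF2} tested at $v_h$, using the weak harmonicity~\eqref{eq:WeaklyHarmonic} of $v_h$ to discard the element-bubble contributions, which yields
\[
 \|e\|_b^2 = \mathcal R(e-v_h) + (f,\,v_h-\widetilde v_h)_\Omega + \big(b_h(u_h,v_h)-b(u_h,v_h)\big),
\]
where $\mathcal R(w)=(f,w)_\Omega+(g_N,w)_{\Gamma_N}-b(u_h,w)$ is the continuous residual. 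The first summand is the classical Galerkin residual; the last two collect the consistency error caused by replacing $b$ with $b_h$ and $f$ with its quadrature.

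For the Galerkin residual I would apply Green's first identity element-wise to $b(u_h,e-v_h)$, producing the element residuals $R_K=f+a_K\Delta u_h$ and, after reorganising the boundary integrals into edge contributions (interior edges combining into conormal jumps, Neumann edges into $g_N-a_K\gamma_1^K u_h$, Dirichlet edges dropping since $e-v_h=0$ there), the edge residuals. Splitting $\gamma_1^Ku_h=\widetilde{\gamma_1^Ku_h}+(\gamma_1^Ku_h-\widetilde{\gamma_1^Ku_h})$ turns the edge terms into the defined $R_E$ plus a trace-correction that I move into the consistency bucket. Each $(R_K,e-v_h)_K$ and $(R_E,e-v_h)_E$ is then estimated by Cauchy--Schwarz and the interpolation bounds of Proposition~\ref{pro:interpolationProperties}, $\|e-v_h\|_{0,K}\le c\,h_K|e|_{1,\omega_K}$ and $\|e-v_h\|_{0,E}\le c\,h_E^{1/2}|e|_{1,\omega_E}$; a discrete Cauchy--Schwarz inequality, the finite overlap of the patches from Lemma~\ref{lem:propertiesMesh}, and the norm equivalence $|e|_{1,\Omega}\le a_\mathrm{min}^{-1/2}\|e\|_b$ bound this contribution by $c\,\eta_R\|e\|_b$.

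The volume part of the consistency error is benign: the quadrature term $(f,v_h-\widetilde v_h)_\Omega$ and the volume part $-\sum_K a_K(\Delta u_h,\widetilde v_h-v_h)_K$ of $b_h-b$ combine into $\sum_K(R_K,v_h-\widetilde v_h)_K$, bounded through the approximation property of the averaged Taylor polynomial $\widetilde v_h$ and the stability of $\mathfrak I_h$, again giving a $c\,\eta_R\|e\|_b$ term (it in fact merges with the element-residual term into $\sum_K(R_K,e-\widetilde v_h)_K$). What remains, and what I expect to be the main obstacle, is the boundary part of the consistency error: the trace-correction from the Galerkin residual and the boundary part $\sum_K a_K(\widetilde{\gamma_1^Ku_h}-\gamma_1^Ku_h,v_h)_{\partial K}$ of $b_h-b$ combine, the $v_h$ cancelling, into
\[
 \sum_{K\in\mathcal K_h} a_K\big(\widetilde{\gamma_1^Ku_h}-\gamma_1^Ku_h,\;e\big)_{\partial K},
\]
which has to be controlled by $c\,\delta_R\|e\|_b$.

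The difficulty is that a naive Cauchy--Schwarz yields $\sum_K\delta_K\|e\|_{0,\partial K}$, and the full boundary norm $\|e\|_{0,\partial K}$ cannot be bounded by $|e|_{1,\Omega}$ alone, because its scaled trace estimate carries an $h_K^{-1/2}\|e\|_{0,K}$ term that does not sum to the $H^1$-seminorm. To recover the correct scaling I would exploit the Galerkin orthogonality of the boundary element scheme~\eqref{eq:discVFofBIE}: the conormal-derivative error $\gamma_1^Ku_h-\widetilde{\gamma_1^Ku_h}$ is orthogonal, in the single-layer inner product $(\mathbf V_K\cdot,\cdot)_{\partial K}$, to all of $\mathcal P_\mathrm{pw,d}^{k-1}(\partial K)$. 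This freedom lets me subtract from $e$ a suitable element-local function (at the very least its mean) before applying Cauchy--Schwarz, after which a scaled trace inequality combined with a Poincar\'e estimate on $K$ — legitimate by the shape-regularity of the auxiliary triangulation, Lemma~\ref{lem:regAuxTria} — replaces $\|e\|_{0,\partial K}$ by $c\,h_K^{1/2}|e|_{1,K}$. Summation then gives $\sum_K\delta_K\,h_K^{1/2}|e|_{1,K}\le\delta_R\,|e|_{1,\Omega}\le c\,\delta_R\|e\|_b$, using $h_K<1$. Collecting the three contributions yields $\|e\|_b^2\le c\,(\eta_R+\delta_R)\|e\|_b\le c\,\{\eta_R^2+\delta_R^2\}^{1/2}\|e\|_b$, and dividing by $\|e\|_b$ finishes the proof; the dependence on $a$ enters through the norm equivalence and the boundary element estimate, and on $k$ through the polynomial-approximation and trace constants.
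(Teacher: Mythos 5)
Your overall route coincides with the paper's proof: the identity you reach after splitting $\gamma_1^Ku_h=\widetilde{\gamma_1^Ku_h}+(\gamma_1^Ku_h-\widetilde{\gamma_1^Ku_h})$ is exactly the paper's residual representation~\eqref{eq:Residual} (with $v=e$ and $v_h$ a quasi-interpolant of the error), and your treatment of the element and edge residuals via Cauchy--Schwarz, Proposition~\ref{pro:interpolationProperties} and the finite-overlap property of Lemma~\ref{lem:propertiesMesh} is the paper's argument. The genuine gap is in your handling of the consistency term $\sum_K a_K(\gamma_1^Ku_h-\widetilde{\gamma_1^Ku_h},e)_{\partial K}$. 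The Galerkin orthogonality of~\eqref{eq:discVFofBIE} says $(\mathbf V_K(\gamma_1^Ku_h-\widetilde{\gamma_1^Ku_h}),q)_{\partial K}=0$ for all $q\in\mathcal P_\mathrm{pw,d}^{k-1}(\partial K)$; by the self-adjointness of $\mathbf V_K$ this means the Neumann-trace error is $L_2(\partial K)$-orthogonal to the single-layer potentials $\mathbf V_Kq$, \emph{not} to the polynomials $q$ themselves. In particular it is not orthogonal to constants: that would require $1\in\mathbf V_K\bigl(\mathcal P_\mathrm{pw,d}^{k-1}(\partial K)\bigr)$, i.e.\ that the equilibrium density $\mathbf V_K^{-1}1$ be piecewise polynomial, which fails on a general polygon (it has corner singularities). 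So you are not entitled to subtract the mean of $e$ before applying Cauchy--Schwarz, and the claimed bound $\sum_K\delta_K\,h_K^{1/2}|e|_{1,K}$ does not follow. The paper does something much simpler here: Cauchy--Schwarz gives $\delta_K\|v\|_{0,\partial K}$, the trace inequality gives $\delta_K\|v\|_{1,K}$, and summation gives $\delta_R\|v\|_{1,\Omega}$. Your observation that the trace constant on a shrinking element scales like $h_K^{-1/2}$ is a fair criticism of that step (the paper glosses over it), but your repair does not close that hole; it replaces it with a different unjustified step.

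A second, smaller gap is your appeal to ``the stability of $\mathfrak I_h$'' to control $|v_h|_{1,K}$, which you need for $\|v_h-\widetilde v_h\|_{0,K}\leq ch_K|v_h|_{1,K}$. No such $H^1$-stability estimate is available: Proposition~\ref{pro:interpolationProperties} only provides $L_2$ bounds on elements and edges, and since the trial functions are non-polynomial no inverse inequality applies. This is precisely why the paper takes $v_h=\widetilde{\mathfrak I}_hv$ rather than $\mathfrak I_hv$: by construction $\widetilde{\mathfrak I}_hv$ agrees on $\partial K$ with the classical Cl\'ement interpolant $\mathfrak I_{h,C}v$ on the auxiliary triangulation, and since $\widetilde{\mathfrak I}_hv$ is weakly harmonic in $K$ it minimizes the Dirichlet energy among all functions with that trace, whence $|\widetilde{\mathfrak I}_hv|_{1,K}\leq|\mathfrak I_{h,C}v|_{1,K}$, which the classical Cl\'ement theory controls. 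With your choice $\mathfrak I_h$ (element-patch averages $Q_{\omega_z}$) the boundary trace is not the Cl\'ement interpolant, so this comparison function is missing; switching to $\widetilde{\mathfrak I}_h$ repairs this part of your argument, but the $\delta$-term issue above remains.
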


The term~$\delta_K$ measures the approximation error in the Neumann traces of the basis functions of $V_h^k$ coming from the boundary element method. 

To state the efficiency, we introduce the notation $\|\cdot\|_{b,\omega}$ for $\omega\subset\Omega$, which means that the energy norm is only computed over the subset~$\omega$. More precisely, it is $\|v\|_{b,\omega}^2 = (a\nabla v,\nabla v)_\omega$ for our model problem.
\begin{theorem}[Efficiency]\label{th:EfficiencyResErrEst}
 Under the assumptions of Theorem~\ref{th:ReliabilityResErrEst}, the residual based error indicator is efficient, i.e.
\begin{eqnarray*}
 \eta_K 
 & \leq & c\,\bigg(\|u-u_h\|_{b,\widetilde\omega_K}^2
                    + h_K^2\|f-\widetilde f\|_{0,\widetilde\omega_K}^2
                    + \sum_{E\in\mathcal E(K)\cap\mathcal E_{h,N}}h_E\|g_N-\widetilde g_N\|_{0,E}^2\\
 &      & \qquad    + \sum_{E\in\mathcal E(K)}\sum_{K'\subset\widetilde\omega_E}h_E\|a_{K'}\gamma_1^{K'}u_h-a_{K'}\widetilde{\gamma_1^{K'}u_h}\|_{0,E}^2\bigg)^{1/2},
\end{eqnarray*}
 where $\widetilde f$ and $\widetilde g_N$ are piecewise polynomial approximations of the data $f$ and $g_N$, respectively.
 The constant $c>0$ only depends on the regularity parameters $\sigma_{\mathcal K}$, $c_{\mathcal K}$, see Definition~\ref{def:regularMesh}, the approximation order~$k$ and on the diffusion coefficient~$a$.
\end{theorem}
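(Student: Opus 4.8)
The plan is to establish efficiency by the standard bubble function technique of Verf\"urth, adapted to the polygonal setting through the auxiliary triangulation and the isosceles triangles of Lemma~\ref{lem:IsoscelesTriangles}. The starting point is the residual representation of the error. Integrating $b(u-u_h,v)$ element-wise by parts via Green's first identity and using the weak formulation~\eqref{eq:VF}, for every $v\in V$ one obtains
\[
 b(u-u_h,v) = \sum_{K\in\mathcal K_h}(R_K,v)_K + \sum_{E\in\mathcal E_h}(\widehat R_E,v)_E,
\]
where $\widehat R_E$ denotes the edge residual formed with the \emph{exact} conormal derivatives $\gamma_1^K u_h$ rather than their BEM approximations $\widetilde{\gamma_1^K u_h}$. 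The discrepancy $\widehat R_E - R_E$ is controlled precisely by the terms $\|a_{K'}\gamma_1^{K'}u_h - a_{K'}\widetilde{\gamma_1^{K'}u_h}\|_{0,E}$ appearing in the assertion, so tracking this difference throughout is essential and it will never cancel.

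First I would bound the element contribution $h_K\|R_K\|_{0,K}$. Let $\widetilde R_K = \widetilde f + a_K\Delta u_h$ be the polynomial replacing $R_K$, and let $b_K$ be an interior bubble supported on a triangle of $\mathcal T_h(K)$. Testing the representation with $w = \widetilde R_K\, b_K \in H_0^1(K)$ and exploiting the norm equivalence $\|\widetilde R_K\|_{0,K}^2 \leq c\,(\widetilde R_K, w)_K$ together with the inverse inequality $|w|_{1,K}\leq c\,h_K^{-1}\|w\|_{0,K}$, one arrives, after absorbing the oscillation $R_K-\widetilde R_K = f-\widetilde f$, at
\[
 h_K\|R_K\|_{0,K} \leq c\left(\|u-u_h\|_{b,K} + h_K\|f-\widetilde f\|_{0,K}\right).
\]

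The core of the argument is the edge contribution $h_E^{1/2}\|R_E\|_{0,E}$. Here I would use an edge bubble $b_E$ whose support lies in the isosceles triangles $T_E^{\mathrm{iso}}$ inside the two elements sharing $E$; this is exactly where Lemma~\ref{lem:IsoscelesTriangles} is indispensable, since the polygonal elements may be non-convex and a bubble supported on the full elements is unavailable. Extending the polynomial $\widetilde R_E$ (built from the BEM traces and $\widetilde g_N$) constantly transversal to $E$ and multiplying by $b_E$ yields a test function $w$ supported on $\widetilde\omega_E$ with the scalings $\|w\|_{0,K'}\leq c\,h_E^{1/2}\|\widetilde R_E\|_{0,E}$ and $|w|_{1,K'}\leq c\,h_E^{-1/2}\|\widetilde R_E\|_{0,E}$, the latter following from Lemma~\ref{lem:step2} and inverse estimates on $T_E^{\mathrm{iso}}$. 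Inserting $w$ into the representation expresses $(\widehat R_E,w)_E$ through $b(u-u_h,w)$ and the already-estimated element residuals over $\widetilde\omega_E$, and Cauchy--Schwarz with the scalings gives
\[
 h_E^{1/2}\|R_E\|_{0,E} \leq c\left(\|u-u_h\|_{b,\widetilde\omega_E} + \sum_{K'\subset\widetilde\omega_E} h_{K'}\|R_{K'}\|_{0,K'} + h_E^{1/2}\sum_{K'\subset\widetilde\omega_E}\|a_{K'}\gamma_1^{K'}u_h - a_{K'}\widetilde{\gamma_1^{K'}u_h}\|_{0,E}\right),
\]
where the last sum accounts for $\widehat R_E - R_E$ and a Neumann edge contributes in addition $h_E^{1/2}\|g_N-\widetilde g_N\|_{0,E}$.

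Finally I would combine the two bounds: substituting the element estimate into the edge estimate (using $h_E\le c\,h_{K'}$ and $h_{K'}\le c\,h_E$ from Definition~\ref{def:regularMesh} to interchange edge and element diameters), squaring, summing over $E\in\mathcal E(K)$, and invoking the finite overlap of the patches from Lemma~\ref{lem:propertiesMesh} to absorb the factors $a_\mathrm{max}/a_\mathrm{min}$ incurred when passing between $\|\cdot\|_b$ and $\|\cdot\|_0$ on neighbouring elements, produces the claimed bound on $\eta_K$ over $\widetilde\omega_K$. I expect the delicate point to be the edge estimate: the non-convexity of the elements forces the bubble to live on the thin isosceles triangles rather than on the elements themselves, so one must verify that the transversal extension of $\widetilde R_E$ together with Lemma~\ref{lem:step2} still delivers the correct $h_E$-powers. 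The secondary subtlety is the bookkeeping of the BEM trace error, which, unlike in classical FEM, persists as a genuine data-type term and must be carried along in both residual estimates.
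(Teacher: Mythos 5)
Your proposal is correct and follows essentially the same route as the paper's own proof: the error--residual representation (the paper's equation~\eqref{eq:Residual} with $v_h=0$) is tested against bubble-weighted polynomial approximations $\widetilde R_K$, $\widetilde R_E$ of the element and edge residuals, with bubbles supported on sub-triangles of the polygonal elements, norm equivalences and inverse estimates deliver the two local bounds, and the BEM trace error $a_K\|\gamma_1^Ku_h-\widetilde{\gamma_1^Ku_h}\|_{0,E}$ is carried along as an irreducible extra term exactly as you describe. The remaining differences are cosmetic: the paper defines the element bubble as $\varphi_K=\sum_{T\in\mathcal T_h(K)}\phi_T$ over \emph{all} auxiliary triangles (so no polynomial norm equivalence between a single triangle $T$ and $K$ is needed, which your single-triangle bubble tacitly requires), it supports the edge bubble on the auxiliary triangles $T_E$ of Lemma~\ref{lem:regAuxTria} rather than on the isosceles triangles $T_E^{\mathrm{iso}}$, and it keeps $R_E$ defined with the BEM traces so that the trace error appears as an explicit term in the representation rather than as your difference $\widehat R_E-R_E$ --- all algebraically equivalent choices.
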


The terms involving the data approximation $\|f-\widetilde f\|_{0,\widetilde\omega_K}$ and $\|g_N-\widetilde g_N\|_{0,E}$ are often called data oscillations. They are usually of higher order. Furthermore, the approximation of the Neumann traces by the boundary element method appear in the right hand side. This term is related to~$\delta_K$.

\begin{remark}
 Under certain conditions on the diffusion coefficient it is possible to get the estimates in Theorems~\ref{th:ReliabilityResErrEst} and~\ref{th:EfficiencyResErrEst} robust with respect to~$a$, see, i.e.,~\cite{Petzoldt2002}.
\end{remark}

\subsection{Reliability}
We follow the classical lines in the proof of the reliability, see, i.e.,~\cite{Verfuerth2013}. However, we have to take care on the polygonal elements and the quasi-interpolation operators.
\begin{proof}[Proof (Theorem \ref{th:ReliabilityResErrEst})]
The bilinear form $b(\cdot,\cdot)$ is a scalar product on~$V$ due to its boundedness and coercivity, and thus, $V$ is a Hilbert space together with $b(\cdot,\cdot)$ and $\|\cdot\|_b$. The Riesz representation theorem yields
\[
 \|u-u_h\|_b = \sup_{v\in V\setminus\{0\}}\frac{|\mathcal R(v)|}{\|v\|_b}
 \quad\mbox{with}\quad
 \mathcal R(v) = b(u-u_h,v).
\]
To see the reliability of the residual based error estimate, we use this representation of the error in the energy norm and reformulate and estimate the term $|\mathcal R(v)|$ in the following. Let $v_h\in V_h^1$, after a few manipulations using \eqref{eq:VF} and~\eqref{eq:approxDiscVF1}-\eqref{eq:approxDiscVF2}, we obtain
\begin{eqnarray*}
 \mathcal R(v) 
 & = &   (f,v) + (g_N,v)_{\Gamma_N} 
       - (f,\widetilde v_h) - (g_N,v_h)_{\Gamma_N} \\
 &   & + b_h(u_h,v) - b(u_h,v) 
       + b_h(u_h,v_h) - b_h(u_h,v).
\end{eqnarray*}
The formulas~\eqref{eq:ExactBilinearForm} and~\eqref{eq:ApproxBilinearForm} for $b(\cdot,\cdot)$ and $b_h(\cdot,\cdot)$ lead to
\begin{eqnarray*}
 \mathcal R(v) 
 & = & \sum_{K\in\mathcal K_h} \Big\{ (f,v-\widetilde v_h)_K + (g_N,v-v_h)_{\partial K\cap\Gamma_N}\\[-1.5ex]
 &   & \quad\qquad                   - a_K\big\{(\gamma_1^Ku_h-\widetilde{\gamma_1^Ku_h},v)_{\partial K} - (\Delta u_h, v-\widetilde v)_K\big\}\\
 &   & \quad\qquad                   - a_K\big\{(\widetilde{\gamma_1^Ku_h},v-v_h)_{\partial K} - (\Delta u_h, \widetilde v-\widetilde v_h)_K\big\}\Big\}.
\end{eqnarray*}
After some rearrangements of the sums, we obtain
\begin{eqnarray}
 \mathcal R(v) 
 & = & \sum_{K\in\mathcal K_h} \!\Big\{ (f+a_K\Delta u_h,v-\widetilde v_h)_K + (g_N,v-v_h)_{\partial K\cap\Gamma_N}\nonumber\\[-1ex]
 &   & \quad\qquad                  - (a_K\widetilde{\gamma_1^Ku_h},v-v_h)_{\partial K}- a_K(\gamma_1^Ku_h-\widetilde{\gamma_1^Ku_h},v)_{\partial K}\Big\}\label{eq:Residual}\\
 & = & \sum_{K\in\mathcal K_h} \!\Big\{ (R_K,v-\widetilde v_h)_K + \!\!\sum_{E\in\mathcal E(K)}\!\!(R_E,v-v_h)_{E} - a_K(\gamma_1^Ku_h-\widetilde{\gamma_1^Ku_h},v)_{\partial K}\Big\}.\nonumber
\end{eqnarray}
The Cauchy-Schwarz and the triangular inequality yield
\begin{equation}\label{eq:EstTermResidual}
 \mathcal R(v) 
  \leq \sum_{K\in\mathcal K_h} \Big\{
           \|R_K\|_{0,K} \|v-\widetilde v_h\|_{0,K}
           + \sum_{E\in\mathcal E(K)}\|R_E\|_{0,E}\|v-v_h\|_{0,E} 
           + \delta_K\|v\|_{0,\partial K}
       \Big\}.
\end{equation}
We have introduced the notation $\widetilde v$ for the polynomial approximation of $v$. Consequently, we obtain for the approximation~$\widetilde v_h$ of $v_h$ according to Lemma~(4.3.8) in~\cite{BrennerScott2002}
\[
 \|v-\widetilde v_h\|_{0,K} 
 \leq \|v-v_h\|_{0,K} + \|v_h-\widetilde v_h\|_{0,K}
 \leq \|v-v_h\|_{0,K} + ch_K|v_h|_{1,K}
\]
with a constant~$c$ only depending on~$\sigma_\mathcal{K}$ and~$k$.
Let $\mathfrak I_{h,C}$ be the usual Cl\'ement interpolation operator over the auxiliary triangulation~$\mathcal T_h(\Omega)$, which maps into the space of piecewise linear and globally continuous functions, see~\cite{Clement1975}. Due to the construction of $\widetilde{\mathfrak I}_h$, it is $\widetilde{\mathfrak I}_hv\big|_{\partial K}=\mathfrak I_{h,C}v\big|_{\partial K}$ for $K\in\mathcal K_h$. Since $\widetilde{\mathfrak I}_hv$ is (weakly) harmonic on~$K$, it minimizes the energy such that
\[
  |\widetilde{\mathfrak I}_hv|_{1,K} 
  = \min\{|w|_{1,K}:w\in H^1(K),w=\widetilde{\mathfrak I}_hv\mbox{ on } \partial K\} 
  \leq |\mathfrak I_{h,C}v|_{1,K}.
\]
We choose $v_h=\widetilde{\mathfrak I}_hv$ and obtain
\begin{eqnarray*}
 \|v-\widetilde v_h\|_{0,K} 
 & \leq & \|v-\widetilde{\mathfrak I}_hv\|_{0,K} + ch_K|\mathfrak I_{h,C}v|_{1,K}\\
 & \leq & \|v-\widetilde{\mathfrak I}_hv\|_{0,K} + ch_K(|v|_{1,K} + |v-\mathfrak I_{h,C}v|_{1,K})\\
 & \leq & \|v-\widetilde{\mathfrak I}_hv\|_{0,K} + ch_K\bigg(|v|_{1,K} + \Big(\sum_{z\in\mathcal N(K)}|v|_{1,\widetilde\omega_z}^2 + |v|_{1,K}^2\Big)^{1/2}\bigg),
\end{eqnarray*}
where an interpolation error estimate for the Cl\'ement operator has been applied, see~\cite{Clement1975}. Proposition~\ref{pro:interpolationProperties} and $|\mathcal N(K)|<c$, according to Lemma~\ref{lem:propertiesMesh}, yield
\[\|v-\widetilde v_h\|_{0,K} \leq ch_K|v|_{1,\omega_K}.\]
Estimating $\|v-v_h\|_{0,E}$ with Proposition~\ref{pro:interpolationProperties} gives
\begin{eqnarray*}
 \sum_{E\in\mathcal E(K)}\|R_E\|_{0,E}\|v-\widetilde{\mathfrak I}_hv\|_{0,E} 
 & \leq & \sum_{E\in\mathcal E(K)}ch_E^{1/2}\|R_E\|_{0,E}|v|_{1,\omega_E} \\
 & \leq & c|v|_{1,\omega_K}\Big(\sum_{E\in\mathcal E(K)}h_E\|R_E\|_{0,E}^2\Big)^{1/2},
\end{eqnarray*}
where we have used again Cauchy-Schwarz and Lemma~\ref{lem:propertiesMesh}.
We combine the previous estimates and apply the trace inequality in such a way that the residual in~\eqref{eq:EstTermResidual} is bounded by
\begin{eqnarray*}
 \mathcal R(v) 
  & \leq & c\sum_{K\in\mathcal K_h} \Big\{
              h_K\|R_K\|_{0,K}|v|_{1,\omega_K}
              + \Big(\sum_{E\in\mathcal E(K)}h_E\|R_E\|_{0,E}^2\Big)^{1/2}|v|_{1,\omega_K}
              + \delta_K\|v\|_{1,K}
           \Big\}\\
 & \leq & c\sum_{K\in\mathcal K_h}\Big\{\eta_K\:|v|_{1,\omega_K} + \delta_K\:\|v\|_{1,K}\Big\}
   \leq   c\:\eta_R\:\|v\|_{1,\Omega}.
\end{eqnarray*}
The last estimate is valid since each element is covered by a finite number of patches, see Lemma~\ref{lem:propertiesMesh}.
The norm~$\|\cdot\|_{1,\Omega}$ and the semi-norm~$|\cdot|_{1,\Omega}$ are equivalent on~$V$ and $\sqrt{a/a_\mathrm{min}}>1$. Therefore, we conclude
\[\vert\mathcal R(v)\vert \leq \frac{c}{\sqrt{a_\mathrm{min}}}\:\eta_R\:\Vert v\Vert_b,\]
which finishes the proof.
\end{proof}

\subsection{Efficiency}
We adapt the bubble function technique to polygonal meshes. Therefore, let $\phi_T$ and $\phi_E$ be the usual polynomial bubble functions over the auxiliary triangulation~$\mathcal T_h(\Omega)$, see~\cite{AinsworthOden2000,Verfuerth2013}. Here, $\phi_T$ is a quadratic polynomial over the triangle $T\in\mathcal T_h(\Omega)$, which vanishes on $\Omega\setminus T$ and in particular on~$\partial T$. The edge bubble $\phi_E$ is a piecewise quadratic polynomial over the adjacent triangles in~$\mathcal T_h(\Omega)$, sharing the common edge~$E$, and it vanishes elsewhere. We define the new bubble functions over the polygonal mesh as
\[
 \varphi_K = \sum_{T\in\mathcal T_h(K)} \phi_T
 \qquad\mbox{and}\qquad
 \varphi_E = \phi_E
\]
for $K\in\mathcal K_h$ and $E\in\mathcal E_h$.
\begin{lemma}\label{lem:BubbleFunc}
 Let $K\in\mathcal K_h$ and $E\in\mathcal E(K)$. The bubble functions satisfy
 \[
  \begin{aligned}
   \supp \varphi_K &= K, & 0\leq&\varphi_K\leq1,\\
   \supp \varphi_E &\subset \widetilde\omega_E, & 0\leq&\varphi_E\leq1,
  \end{aligned}
 \]
 and fulfill for $p\in\mathcal P^k(K)$ the estimates
 \begin{align*}
  \|p\|_{0,K}^2 &\leq c\, (\varphi_Kp,p)_K, &
  |\varphi_Kp|_{1,K} &\leq ch_K^{-1}\, \|p\|_{0,K},\\
  \|p\|_{0,E}^2 &\leq c\, (\varphi_Ep, p)_E, &
  |\varphi_Ep|_{1,K} &\leq ch_E^{-1/2}\,\|p\|_{0,E},\\
  && \|\varphi_Ep\|_{0,K} &\leq ch_E^{1/2}\,\|p\|_{0,E}.
 \end{align*}
 The constants $c>0$ only depend on the regularity parameters $\sigma_{\mathcal K}$, $c_{\mathcal K}$ and on the approximation order~$k$.
\end{lemma}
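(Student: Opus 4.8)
The plan is to reduce every claim to the classical bubble function estimates on the single triangles of the auxiliary triangulation $\mathcal T_h(K)$ and then to assemble the local bounds, controlling all scaling factors by means of the mesh regularity. First I would dispose of the qualitative statements. Normalising the standard triangle and edge bubbles so that $0\le\phi_T,\phi_E\le1$, the bound $0\le\varphi_K\le1$ follows because the triangles of $\mathcal T_h(K)$ have pairwise disjoint interiors and $\varphi_K|_T=\phi_T$ there, while $\supp\varphi_K=K$ is immediate from $\bigcup_{T\in\mathcal T_h(K)}\overline T=\overline K$. Likewise $\supp\varphi_E\subset\widetilde\omega_E$ and $0\le\varphi_E\le1$ follow directly from the definition of $\phi_E$ on the two auxiliary triangles adjacent to $E$. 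Since each $\phi_T$ vanishes on $\partial T$, the function $\varphi_K$ is continuous across the interior edges of $\mathcal T_h(K)$ and vanishes on $\partial K$, hence $\varphi_K\in H^1_0(K)$ and its seminorm may be computed triangle by triangle.

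For the two estimates involving $\varphi_K$ I would use that $(\varphi_Kp,p)_K=\sum_{T\in\mathcal T_h(K)}(\phi_Tp,p)_T$ and $|\varphi_Kp|_{1,K}^2=\sum_{T}|\phi_Tp|_{1,T}^2$, because the $\phi_T$ have disjoint supports and $\varphi_K$ is $H^1$-conforming. Applying the classical reference-triangle estimates $\|q\|_{0,T}^2\le c(\phi_Tq,q)_T$ and $|\phi_Tq|_{1,T}\le ch_T^{-1}\|q\|_{0,T}$ to $q=p|_T\in\mathcal P^k(T)$, with constants depending only on $k$ and on the shape-regularity constant $\sigma_{\mathcal T}$ from Lemma~\ref{lem:regAuxTria}, and summing over $T$ yields $\|p\|_{0,K}^2\le c(\varphi_Kp,p)_K$ at once. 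For the gradient bound the per-triangle factor is $h_T^{-1}$, which is larger than $h_K^{-1}$; here the regularity enters decisively: since $E$ is a side of $T=T_E$ one has $h_E\le h_{T_E}$, and Definition~\ref{def:regularMesh} gives $h_K\le c_{\mathcal K}h_E\le c_{\mathcal K}h_{T_E}$, so $h_{T_E}^{-1}\le c_{\mathcal K}h_K^{-1}$. Summing then converts the local scaling into the desired $h_K^{-1}$.

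For the three edge estimates I would restrict attention to $K$, where $\varphi_E=\phi_E$ is supported on the single auxiliary triangle $T_E\in\mathcal T_h(K)$, so that $\|\varphi_Ep\|_{0,K}=\|\phi_Ep\|_{0,T_E}$ and $|\varphi_Ep|_{1,K}=|\phi_Ep|_{1,T_E}$. On $E$ itself $\varphi_E|_E$ is the one-dimensional quadratic bubble, and the scale-invariant one-dimensional estimate $\|q\|_{0,E}^2\le c(\phi_Eq,q)_E$ applied to $q=p|_E\in\mathcal P^k(E)$ gives the first edge inequality. The remaining two follow from the standard edge-bubble calculus on $T_E$ (taking, as is customary, $p$ to represent the fixed scale-stable polynomial extension of its edge trace), combined with the observation that by Definition~\ref{def:regularMesh} the quantities $h_E$, $h_{T_E}$ and $h_K$ are all mutually comparable, so the reference scalings $h_{T_E}^{\mp1/2}$ may be replaced by $h_E^{\mp1/2}$ without affecting the constants.

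The main obstacle is not any single inequality but the bookkeeping of scales: the classical estimates live on the triangles $T_E$ and carry the factors $h_{T_E}^{\pm1}$, whereas the lemma is phrased in terms of $h_K$ and $h_E$. The crucial point is therefore the two-sided comparability $h_E\simeq h_{T_E}\simeq h_K$, which is exactly what the aspect-ratio bound and the condition $h_K\le c_{\mathcal K}h_E$ of Definition~\ref{def:regularMesh}, together with Lemma~\ref{lem:regAuxTria}, provide; once this is in hand, every constant depends only on $\sigma_{\mathcal K}$, $c_{\mathcal K}$ and $k$, as claimed.
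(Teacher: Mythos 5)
Your proposal is correct and follows essentially the same route as the paper, whose proof is only a sketch: cite the classical bubble-function estimates for $\phi_T$ and $\phi_E$ on the shape-regular auxiliary triangulation $\mathcal T_h(\Omega)$ (Lemma~\ref{lem:regAuxTria}) and translate them to $\varphi_K$ and $\varphi_E$ using the disjointness of the triangle supports and the mesh-regularity comparisons $h_E\leq h_{T_E}\leq h_K\leq c_{\mathcal K}h_E$. Your version simply fills in the bookkeeping the paper omits, including the (correct) observation that the last two edge estimates must be read with $p$ interpreted as the canonical polynomial extension of its trace on $E$, which is exactly how the lemma is used in the efficiency proof.
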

\begin{proof}
 Similar estimates are valid for $\phi_T$ and $\phi_E$ on triangular meshes, see~\cite{AinsworthOden2000,Verfuerth2013}. By the use of Cauchy-Schwarz inequality and the properties of the auxiliary triangulation~$\mathcal T_h(\Omega)$ the estimates translate to the new bubble functions. The details of the proof are omitted. 
\end{proof}

With these ingredients the proof of Theorem~\ref{th:EfficiencyResErrEst} can be addressed. The arguments follow the line of~\cite{AinsworthOden2000}.
\begin{proof}[Proof (Theorem~\ref{th:EfficiencyResErrEst})]
Let $\widetilde R_K\in\mathcal P^k(K)$ be a polynomial approximation of the element residual~$R_K$ for $K\in\mathcal K_h$. For $v=\varphi_K\widetilde R_K\in H^1_0(K)$ and $v_h=0$ equation~\eqref{eq:Residual} yields
\[
  b(u-u_h,\varphi_K\widetilde R_K) = \mathcal R(\varphi_K\widetilde R_K) = (R_K,\varphi_K\widetilde R_K)_K.
\]
Lemma~\ref{lem:BubbleFunc} gives
\begin{eqnarray*}
 \|\widetilde R_K\|_{0,K}^2 
 & \leq & c\,(\varphi_K\widetilde R_K,\widetilde R_K)_K\\
 &  =   & c\left((\varphi_K\widetilde R_K,\widetilde R_K-R_K)_K + (\varphi_K\widetilde R_K,R_K)_K\right)\\
 & \leq & c\left(\|\widetilde R_K\|_{0,K}\|\widetilde R_K-R_K\|_{0,K} + b(u-u_h,\varphi_K\widetilde R_K)\right),
\end{eqnarray*}
and furthermore
\[
  b(u-u_h,\varphi_K\widetilde R_K)
  \leq c\, |u-u_h|_{1,K}|\varphi_K\widetilde R_K|_{1,K}
  \leq ch_K^{-1}\, \|u-u_h\|_{b,K}\|\widetilde R_K\|_{0,K}.
\]
We thus get
\[\|\widetilde R_K\|_{0,K} \leq c\left(h_K^{-1}\|u-u_h\|_{b,K}+\|\widetilde R_K-R_K\|_{0,K}\right),\]
and by the lower triangular inequality
\[\|R_K\|_{0,K} \leq c\left(h_K^{-1}\|u-u_h\|_{b,K}+\|\widetilde R_K-R_K\|_{0,K}\right).\]

Next, we consider the edge residual. Let $\widetilde R_E\in\mathcal P^k(E)$ be an approximation of $R_E$, with $E\in\mathcal E_{h,\Omega}$.
The case $E\in\mathcal E_{h,N}$ is treated analogously. For $v=\varphi_E\widetilde R_E\in H^1_0(\widetilde\omega_E)$ and $v_h=0$ equation~\eqref{eq:Residual} yields this time
\begin{eqnarray*}
 \lefteqn{b(u-u_h,\varphi_E\widetilde R_E) = \mathcal R(\varphi_E\widetilde R_E)}\\
 &\quad = & \sum_{K\subset\widetilde\omega_E}\left\{(R_K,\varphi_E\widetilde R_E)_K + (R_E,\varphi_E\widetilde R_E)_E - a_K(\gamma_1^Ku_h-\widetilde{\gamma_1^Ku_h},\varphi_E\widetilde R_E)_E\right\}.
\end{eqnarray*}
Applying Lemma~\ref{lem:BubbleFunc} and the previous formula leads to
\begin{eqnarray*}
 \|\widetilde R_E\|_{0,E}^2
 & \leq & c\,(\varphi_E\widetilde R_E, \widetilde R_E)_E\\
 &  =   & c\left((\varphi_E\widetilde R_E, \widetilde R_E-R_E)_E + (\varphi_E\widetilde R_E, R_E)_E\right)\\
 & \leq & c\left(\|\widetilde R_E\|_{0,E}\|\widetilde R_E-R_E\|_{0,E} + (\varphi_E\widetilde R_E, R_E)_E\right),
\end{eqnarray*}
and
\begin{eqnarray*}
 \lefteqn{|(\varphi_E\widetilde R_E, R_E)_E|}\\
 & =    & \tfrac12\, \bigg|b(u-u_h,\varphi_E\widetilde R_E) - \sum_{K\subset\widetilde\omega_E}\Big\{(R_K,\varphi_E\widetilde R_E)_K 
                 - a_K(\gamma_1^Ku_h-\widetilde{\gamma_1^Ku_h},\varphi_E\widetilde R_E)_E\Big\}\bigg|\\
 & \leq & c\, \bigg(|u-u_h|_{1,\widetilde\omega_E}|\varphi_E\widetilde R_E|_{1,\widetilde\omega_E} \\
 &      &\qquad + \sum_{K\subset\widetilde\omega_E}\Big\{\|R_K\|_{0,K}\|\varphi_E\widetilde R_E\|_{0,K}
          + a_K\|\gamma_1^Ku_h-\widetilde{\gamma_1^Ku_h}\|_{0,E}\|\widetilde R_E\|_{0,E}\Big\}\bigg)\\
 & \leq & c\, \bigg(h_E^{-1/2}\|u-u_h\|_{b,\widetilde\omega_E} + \sum_{K\subset\widetilde\omega_E}\Big\{h_E^{1/2}\|R_K\|_{0,K} 
          + a_K\|\gamma_1^Ku_h-\widetilde{\gamma_1^Ku_h}\|_{0,E}\Big\}\bigg)\,\|\widetilde R_E\|_{0,E}.
\end{eqnarray*}
Therefore, it is
\begin{eqnarray*}
 \|\widetilde R_E\|_{0,E} 
 & \leq & c\bigg(
            h_E^{-1/2}\|u-u_h\|_{b,\widetilde\omega_E} 
          + \sum_{K\subset\widetilde\omega_E}h_E^{1/2}\|R_K\|_{0,K}\\ 
 &&\qquad + \|\widetilde R_E-R_E\|_{0,E}
          + \sum_{K\subset\widetilde\omega_E}a_K\|\gamma_1^Ku_h-\widetilde{\gamma_1^Ku_h}\|_{0,E}
       \bigg).
\end{eqnarray*}
By the lower triangular inequality, $h_K^{-1}\leq h_E^{-1}$ and the previous estimate for $\|R_K\|_{0,K}$ we obtain
\begin{eqnarray*}
 \|R_E\|_{0,E} 
 & \leq & c\bigg(
            h_E^{-1/2}\|u-u_h\|_{b,\widetilde\omega_E} 
          + \sum_{K\subset\widetilde\omega_E}h_E^{1/2}\|\widetilde R_K-R_K\|_{0,K}\\ 
 &&\qquad + \|\widetilde R_E-R_E\|_{0,E}
          + \sum_{K\subset\widetilde\omega_E}a_K\|\gamma_1^Ku_h-\widetilde{\gamma_1^Ku_h}\|_{0,E}
       \bigg).
\end{eqnarray*}
Let $\widetilde f$ and $\widetilde g_N$ be piecewise polynomial approximations of $f$ and $g_N$, respectively. We choose $\widetilde R_K=\widetilde f+a_K\Delta u_h$ for $K\in\mathcal K_h$, $\widetilde R_E =\widetilde g_N-a_K\widetilde{\gamma_1^Ku_h}$ for $E\in\mathcal E(K)\cap\mathcal E_{h,N}$ and $\widetilde R_E = R_E$ for $E\in\mathcal E_h\setminus \mathcal E_{h,N}$. Consequently, we have $\widetilde R_K\in\mathcal P^k(K)$ and $\widetilde R_E\in\mathcal P^k(E)$. Finally, the estimates for $\|R_K\|_{0,K}$ and $\|R_E\|_{0,E}$ yield after some applications of the Cauchy-Schwarz inequality and Lemma~\ref{lem:propertiesMesh}
\begin{eqnarray*}
 \eta_R^2 
 & \leq & c\,\bigg(\|u-u_h\|_{b,\widetilde\omega_K}^2
                    + h_K^2\sum_{K'\subset\widetilde\omega_K}\|\widetilde R_{K'}-R_{K'}\|_{0,K'}^2\\
 &      & \qquad    + \sum_{E\in\mathcal E(K)}h_E\Big\{\|\widetilde R_E-R_E\|_{0,E}^2
                    + \sum_{K'\subset\widetilde\omega_E}\|a_{K'}\gamma_1^{K'}u_h-a_{K'}\widetilde{\gamma_1^{K'}u_h}\|_{0,E}^2\Big\}\bigg)\\
 & \leq & c\,\bigg(\|u-u_h\|_{b,\widetilde\omega_K}^2
                    + h_K^2\|f-\widetilde f\|_{0,\widetilde\omega K}^2
                    + \sum_{E\in\mathcal E(K)\cap\mathcal E_{h,N}}h_E\|g_N-\widetilde g_N\|_{0,E}^2\\
 &      & \qquad    + \sum_{E\in\mathcal E(K)}\sum_{K'\subset\widetilde\omega_E}h_E\|a_{K'}\gamma_1^{K'}u_h-a_{K'}\widetilde{\gamma_1^{K'}u_h}\|_{0,E}^2\bigg).
\end{eqnarray*}
\end{proof}

\subsection{Application on uniform meshes}
The residual based error estimate can be used as stopping criteria to check if the desired accuracy is reached in a simulation on a sequence of meshes. However, it is well known that residual based estimators overestimate the true error a lot. But, because of the equivalence of the norms $\|\cdot\|_{1,\Omega}$ and $\|\cdot\|_b$ on $V$, we can still use $\eta_R$ to verify numerically the convergence rates for uniform mesh refinement when $h\to0$.

\subsection{Application in adaptive FEM}
\label{subsec:ApplicationAFEM}
The classical adaptive finite element strategy proceeds in the following steps
\[ \mathit{SOLVE} \to \mathit{ESTIMATE} \to \mathit{MARK} \to \mathit{REFINE} \to \mathit{SOLVE} \to \cdots. \]
Sometimes an additional $\mathit{COARSENING}$ step is introduced. We have all ingredient to formulate an adaptive BEM-based FEM on polygonal meshes.

In the $\mathit{SOLVE}$ step, we approximate the solution of the boundary value problem on a given polygonal mesh in the high order trial space~$V_h^k$ with the help of the BEM-based FEM.

The $\mathit{ESTIMATE}$ part is devoted to the computation of local error indicators which are used to gauge the approximation accuracy over each element. Here, we use the term $\eta_K$ from the residual based error estimate.

In $\mathit{MARK}$, we choose some elements according to their indicator $\eta_K$ for refinement. Several marking strategies are possible, we implemented the D\"orflers marking, see~\cite{Doerfler1996}.

Finally in $\mathit{REFINE}$, the marked elements are refined, and thus, a problem adapted mesh is generated. In the implementation each marked element is split into two new ones, as proposed in~\cite{Weisser2011}, and the regularity conditions of the mesh are checked. At this point, we like to stress that the polygonal meshes are beneficial in this context, since hanging nodes are naturally included. Consequently, there is no need for an additional effort avoiding hanging nodes or treating them as conditional degrees of freedom. Also a $\mathit{COARSENING}$ of the mesh is very easy. This can be achieved by simply gluing elements together.

\section{Numerical experiments}
\label{sec:NumericalExperiments} 
In the following we present numerical examples on uniform and adaptive refined meshes, see Figure~\ref{fig:Meshes}. For the convergence analysis, we consider the error with respect to the mesh size $h=\max\{h_K:K\in\mathcal K_h\}$ for uniform refinement. This makes no sense for adaptive strategies. Since the relation
\[\mathrm{DoF} = O(h^{-2})\]
holds for the number of degrees of freedom (DoF) on uniform meshes, we study the convergence of the adaptive BEM-based FEM with respect to them. 

\begin{figure}[htbp]
 \centering
 \includegraphics[trim=3.0cm 6.0cm 3.5cm 6.4cm, width=0.32\textwidth, clip]{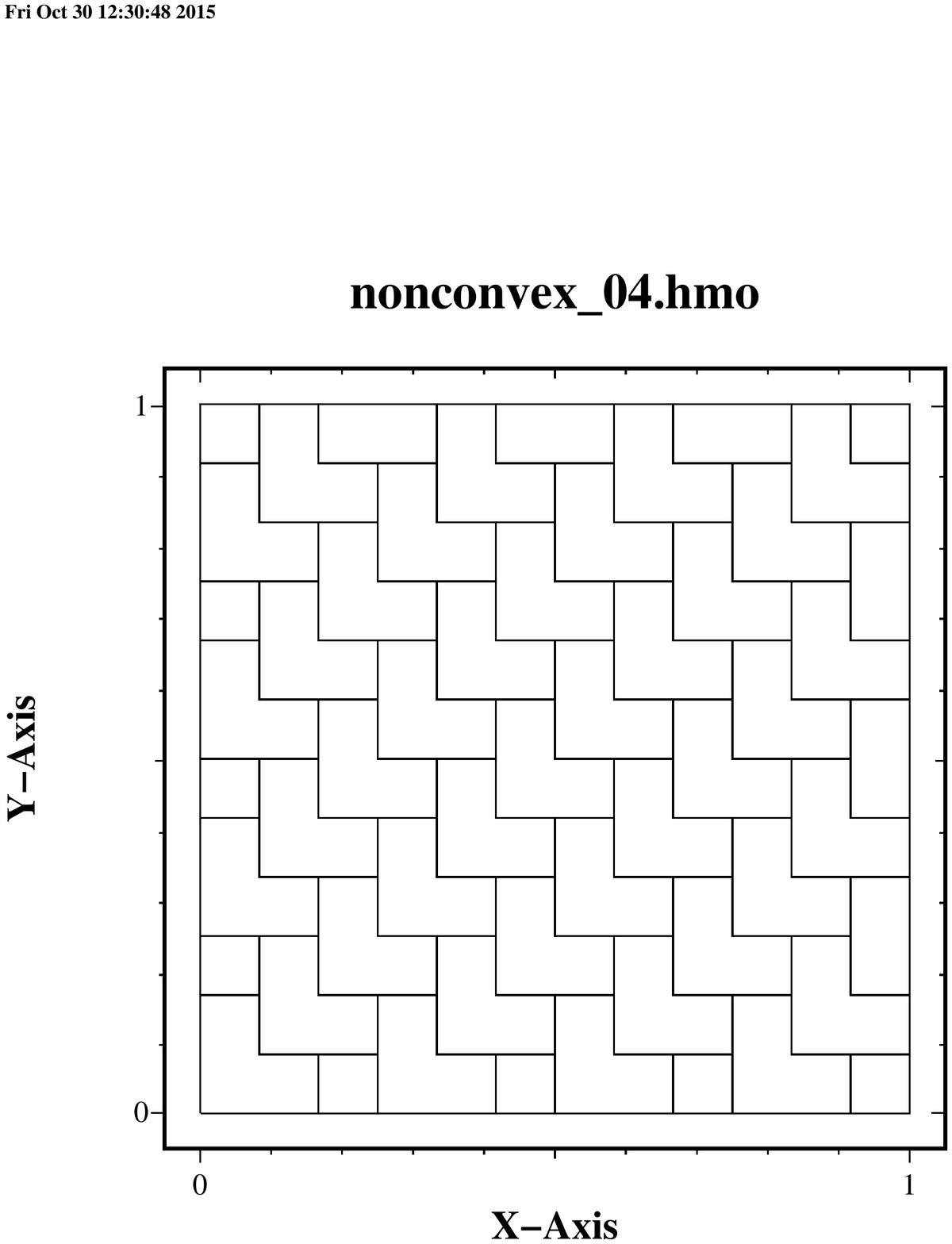}
 \includegraphics[trim=3.0cm 6.0cm 3.5cm 6.4cm, width=0.32\textwidth, clip]{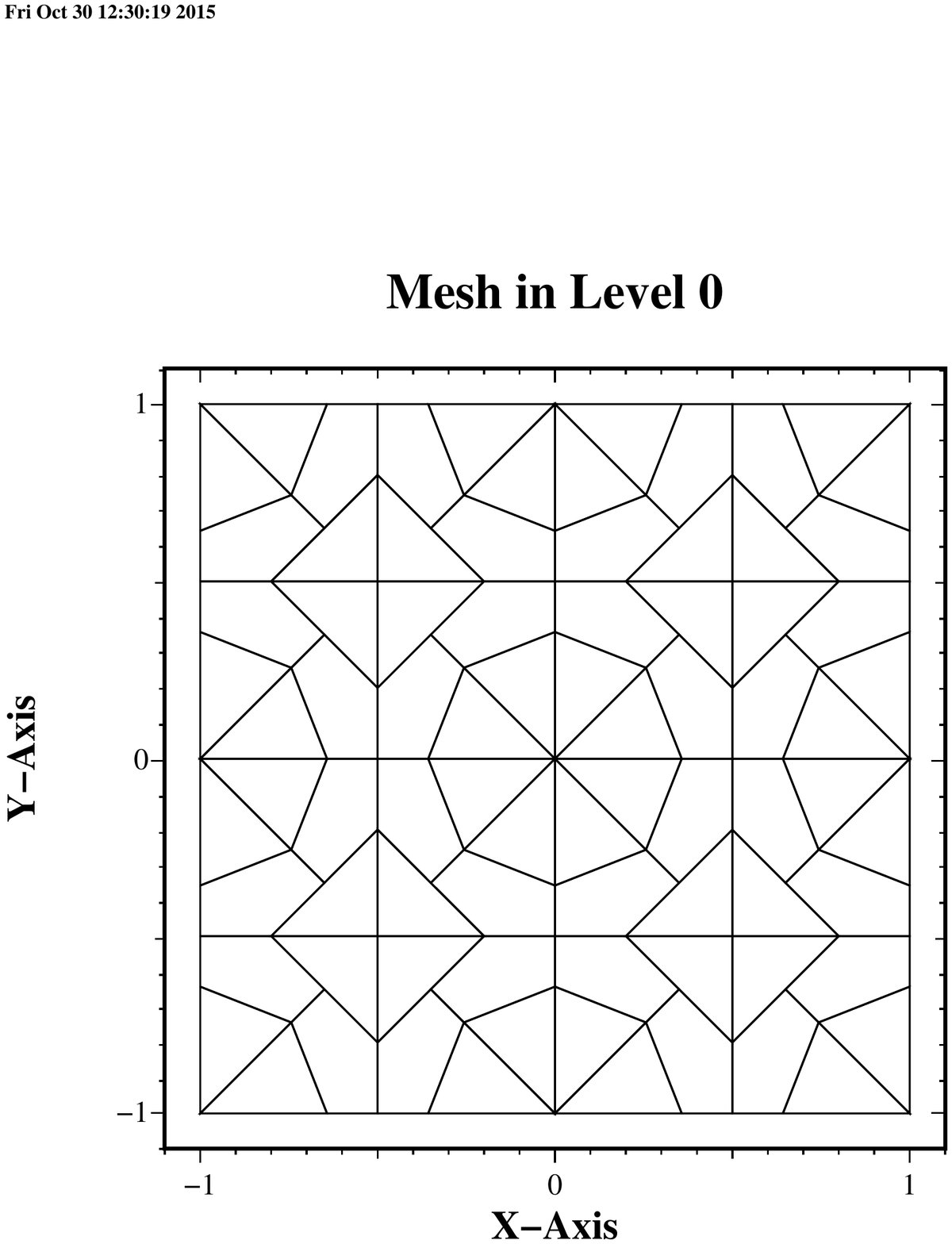}
 \includegraphics[trim=3.0cm 6.0cm 3.5cm 6.4cm, width=0.32\textwidth, clip]{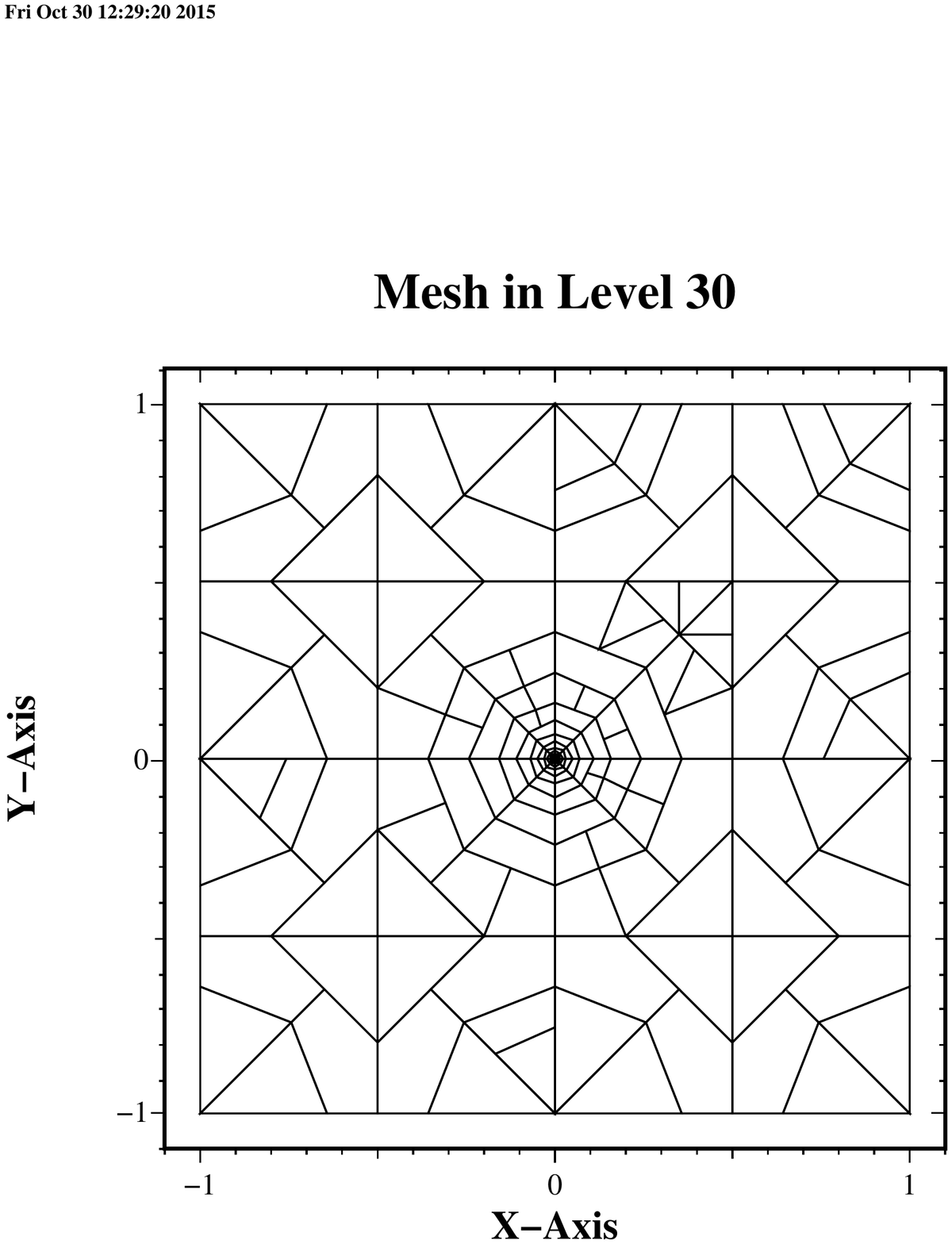}
 \vspace*{-1.5ex}
 \caption{Mesh with L-shaped elements for uniform refinement (left), initial mesh for adaptive refinement (middle), adaptive refined mesh after 30 steps for $k=2$ (right)}
 \label{fig:Meshes}
\end{figure}

\subsection{Uniform refinement strategy}
Consider the Dirichlet boundary value problem
\[
 \begin{aligned}
  -\Delta u &= f &&\mbox{in } \Omega=(0,1)^2,\\
  u &= 0 &&\mbox{on } \Gamma,
 \end{aligned}
\]
where $f\in L_2(\Omega)$ is chosen in such a way that $u(x)=\sin(\pi x_1)\sin(\pi x_2)$ for $x\in\Omega$ is the exact solution. The solution in smooth, and thus, we expect optimal rates of convergence for uniform mesh refinement. The problem is treated with the BEM-based FEM for different approximation orders~$k=1,2,3$ on a sequence of meshes with L-shaped elements of decreasing diameter, see Figure~\ref{fig:Meshes} left. In Figure~\ref{fig:ConvergenceUniform}, we give the convergence graphs in logarithmic scale for the value $\eta_R/|u|_{1,\Omega}$, which behaves like the relative $H^1$-error, and the relative $L_2$-error with respect to the mesh size~$h$. The example confirms the theoretical convergence rates stated in Subsection~\ref{subsec:DiscVF}. 
\begin{figure}[htbp]
 \centering
 \scalebox{0.9}{\input{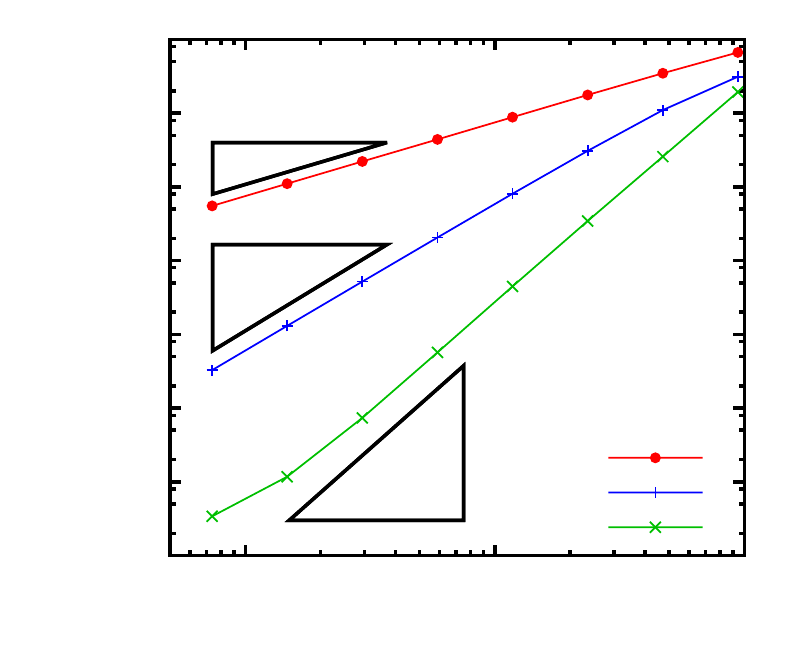_t}}
 \scalebox{0.9}{\input{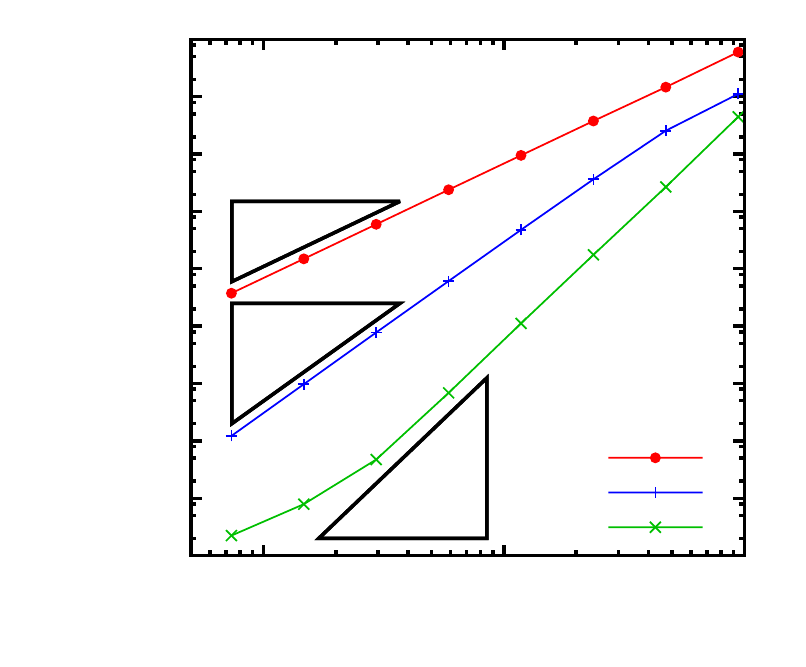_t}}
 \vspace*{-1.5ex}
 \caption{Convergence graph for sequence of uniform meshes and $V_h^k$, $k=1,2,3$, $\eta_R/|u|_{1,\Omega}$ and relative $L_2$-error are given with respect to~$h$ in logarithmic scale}
 \label{fig:ConvergenceUniform}
\end{figure}

\subsection{Adaptive refinement strategy}
Let $\Omega = (-1,1)\times(-1,1)\subset\mathbb{R}^2$ be split into two domains, $\Omega_1 = \Omega\setminus\overline\Omega_2$ and $\Omega_2 = (0,1)\times(0,1)$. Consider the boundary value problem
\[
 \begin{aligned}
  -\div\left( a\nabla u\right) & = 0 && \mbox{in } \Omega, \\
  u & = g && \mbox{on } \Gamma,
 \end{aligned}
\]
where the coefficient $a$ is given by
\[ a = \left\{\begin{array}{rl}
                   1 & \mbox{in } \Omega_1, \\
                   100 & \mbox{in } \Omega_2.
                \end{array}\right.\]
Using polar coordinates $(r,\varphi)$, we choose the boundary data as restriction of the global function
\[ g(x) = r^\lambda\left\{\begin{array}{rl}
                            \cos(\lambda(\varphi-\pi/4)) & \mbox{for } x\in\mathbb R^2_+, \\
                            \beta\cos(\lambda(\pi-|\varphi-\pi/4|)) & \mbox{else,}
                          \end{array}\right.\]
with
\[ \lambda =  \frac{4}{\pi}\arctan\left(\sqrt{\frac{103}{301}}\right) \quad\mbox{and}\quad
   \beta = -100\frac{\sin\left(\lambda\frac{\pi}{4}\right)}{\sin\left(\lambda\frac{3\pi}{4}\right)}. \]
This problem is constructed in such a way that $u=g$ is the exact solution in $\Omega$. Due to the ratio of the jumping coefficient it is $u\not\in H^2(\Omega)$ with a singularity in the origin of the coordinate system. Consequently, uniform mesh refinement does not yield optimal rates of convergence. Since $f=0$, it suffices to approximate the solution in $V_{h,1}^k$ with the variational formulation~\eqref{eq:approxDiscVF1}. We implemented an adaptive strategy according to Subsection~\ref{subsec:ApplicationAFEM}, where the introduced residual based error indicator is utilized in the $\mathit{ESTIMATE}$ step. Starting from an initial polygonal mesh, see Figure~\ref{fig:Meshes} middle, the adaptive BEM-based FEM produces a sequence of locally refined meshes. The approach detects the singularity in the origin of the coordinate system and polygonal elements appear naturally during the local refinement, see Figure~\ref{fig:Meshes} right. In Figure~\ref{fig:ConvergenceAdaptive}, the energy error $\|u-u_h\|_b$ as well as the error estimator~$\eta_R$ are plotted with respect to the number of degrees of freedom in logarithmic scale. As expected by the theory the residual based error estimate represents the behaviour of the energy error very well. Furthermore, the adaptive approach yields optimal rates of convergence in the presence of a singularity.

\begin{figure}[htbp]
 \centering
 \scalebox{0.9}{\input{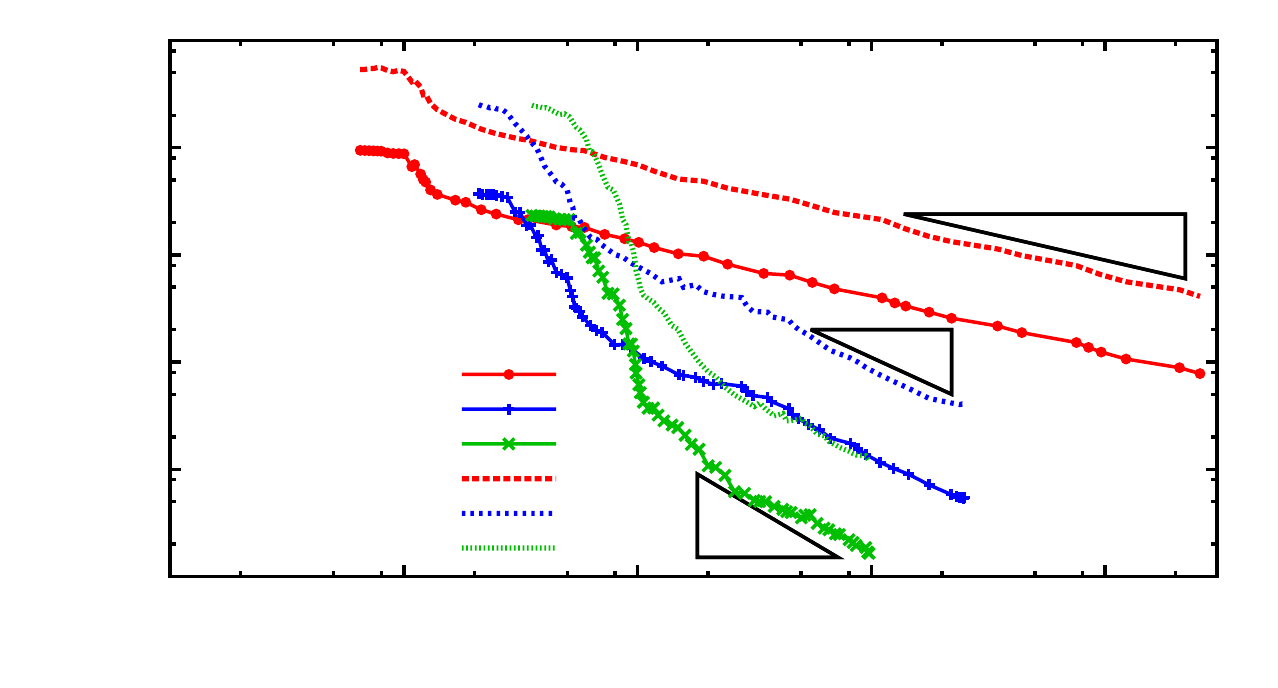_t}}
 \vspace*{-1.5ex}
 \caption{Convergence graph for adaptive BEM-based FEM with $V_{h,1}^k$, $k=1,2,3$, the energy error and the residual based error estimator are given with respect to the number of degrees of freedom in logarithmic sale}
 \label{fig:ConvergenceAdaptive}
\end{figure}

\section{Conclusion}
This publication contains one of the first results on a posteriori error control for conforming approximation methods on polygonal meshes. Such general meshes are very flexible and convenient especially in adaptive mesh refinement strategies. But, their potential application areas are not yet fully exploited. The presented results are a building block for future developments of efficient numerical methods involving polygonal discretizations.


\end{document}